\pgfplotsset{compat=newest}
\newtheorem{theorem}{Theorem}[section]
\newtheorem{lemma}[theorem]{Lemma}
\newtheorem{proposition}[theorem]{Proposition}
\newtheorem{definition}[theorem]{Definition}
\newtheorem{claim}[theorem]{Claim}
\newtheorem*{acknowledgement}{Acknowledgement}
\newcommand{\al}{\alpha}
\newcommand{\be}{\beta}
\newcommand{\La}{\Lambda}
\newcommand{\vp}{\varphi}
\newcommand{\om}{\omega}
\newcommand{\Om}{\Omega}
\newcommand{\cs}{\mathcal S}
\newcommand{\cj}{\mathcal J}
\newcommand{\ff}{\mathscr{F}}
\newcommand{\nf}{\infty}
\newcommand{\ZR}{\mathbb{R}}
\newcommand{\ZZ}{\mathbb{Z}}
\newcommand{\ZN}{\mathbb{N}}
\newcommand{\bT}{{\bf T}}
\newcommand{\tM}{{\mathcal M}}
\newcommand{\hichi}{\raisebox{0.7ex}{\(\chi\)}}
\begin{document}

\title[the bilinear Hilbert transform]{Quasi pieces of the bilinear Hilbert transform incorporated into a paraproduct}
\author{Dong Dong}
\address{Mathematics Department\\
University of Illinois at Urbana-Champaign\\
Urbana, IL 61801}
\email{ddong3@illinois.edu}
\date{\today}

\begin{abstract}
We prove the boundedness of a class of tri-linear operators consisting of a quasi piece of bilinear Hilbert transform whose scale equals to or dominates the scale of its linear counter part. Such type of operators is motivated by the tri-linear Hilbert transform and its curved versions. 
\end{abstract}

\let\thefootnote\relax\footnote{\emph{Key words and phrases}: Bilinear Hilbert transform, paraproduct, tri-linear operators}
\let\thefootnote\relax\footnote{\emph{2010 Mathematics Subject Classification}: 42A50, 47G10, 42B99}

\maketitle

\section{Introduction}
\setcounter{equation}0
\subsection{Background}
In a pair of breakthrough papers \cite{LT97,LT99}, Lacey and Thiele proved the boundedness property of the bilinear Hilbert transform (BHT) 

$$
B(f_1,f_2)(x)=p.v.\int f_1(x-t)f_2(x+t)\frac{1}{t}\,dt.
$$
Many interesting results about multilinear operators have been established in the spirit of Lacey-Thiele's method. 
However, $L^p$-boundedness of tri-linear Hilbert transform (THT)

$$
T(f_1,f_2,f_3)(x)=p.v.\int f_1(x-t)f_2(x-2t)f_3(x-3t)\frac{1}{t}\,dt.
$$
is still unknown. One difficulty arises from certain non-linear issue hidden in the trilinear structure.
This is one of the main reasons motivating Li to study BHT along curves \cite{LAP}, say 

$$
H_\Gamma (f_1,f_2)(x)=p.v.\int f_1(x-t)f_2(x-t^d)\frac{1}{t}\, dt, \text{ where } d\ge 2 \text{ is an integer}.
$$

In \cite{LAP}, $H_\Gamma$ is split into  two operators according to the efficiency of some oscillatory integral estimate (stationary phase vs. non-stationary phase). One of the two operators is a paraproduct of the form $\Pi_\Gamma(f_1,f_2)=\sum_k f_{1k}f_{2k}$ \cite{LNYJ} that is more complex than the classical Coifman-Meyer paraproduct \cite{CM91}. Although it turns out $\Pi_\Gamma$ is slightly simpler than BHT, the proof of its boundedness already requires sophisticated multi-scale time-frequency analysis that is essential in the study of BHT. Hence it is reasonable to expect that tri-linear analogues of 
the paraproduct $\Pi_\Gamma$ would be easier to handle than THT, but at the same time the study of such tri-linear operators could provide some new 
insights to THT.

The definition of tri-linear correspondence of $\Pi_\Gamma(f_1,f_2)$ was given in \cite{DL}, where the author and Li introduced the following class of operators $T^{\alpha,\beta}$ that can be viewed a hybrid of BHT and paraproduct:

\begin{equation} \label{eq: T}
T^{\alpha,\beta}(f_1,f_2,f_3)(x)=\sum_{k\in\ZZ}H^{\alpha, k}(f_1,f_2)(x)f_{3}^{\beta,k}(x), 
\end{equation} where

\begin{equation} 
\begin{cases}
H^{\alpha,k}(f_1,f_2)(x)=\iint_{\ZR^2} \widehat{f_1}(\xi_1)\widehat{f_2}(\xi_2)e^{2\pi i (\xi_1+\xi_2)x}\widehat{\Phi_1}\left(\frac{\xi_1-\xi_2}{2^{\al k}}\right) \, d\xi_1d\xi_2,  \label{eq: H}\\
\\
f^{\beta,k}(x)=\int_{\ZR}\widehat{f}(\xi)e^{2\pi i\xi x}\widehat{\Phi_2}\left(\frac{\xi}{2^{\beta k}}\right)\, d\xi. 
\end{cases}
\end{equation}

\noindent Here $\al, \beta$ are non-zero positive real numbers, and various conditions (about smoothness, support, etc) can be imposed on the cut-off functions $\widehat{\Phi_1}$ and $\widehat{\Phi_2}$. 

$T^{\alpha, \beta}$ is closely related with THT along curves. For example, one promising way to prove the boundedness of $T_C(f_1,f_2,f_3)(x)=p.v.\int f_1(x-t)f_2(x+t)f_3(x-t^d)\frac{dt}{t}$ is to study $T^{1,d}$ first (See \cite{LAP} for a similar approach in the bilinear setting). The following theorem is proved in \cite{DL}.
\begin{theorem} [\cite{DL}, Theorem 1.2]\label{Thm:DL}
  Let $\Phi_1$ and $\Phi_2$ be smooth functions satisfying supp $\widehat{\Phi_1}\subseteq [9, 10]$ and supp $\widehat{\Phi_2}\subseteq [-1, 1]$. Assume $\al=\beta\neq 0$. Then the operator $T^{\alpha,\beta}$ defined by \eqref{eq: T}\eqref{eq: H} is bounded from $L^{p_1}\times L^{p_2}\times L^{p_3}$ to $L^p$, $\frac{1}{p}=\frac{1}{p_1}+\frac{1}{p_2}+\frac{1}{p_3}$, whenever $(p_1,p_2,p_3)\in D=\{(p_1,p_2,p_3)\in (1, \nf)^3: \frac{1}{p_1}+\frac{1}{p_2}<\frac{3}{2}\}$.
\end{theorem}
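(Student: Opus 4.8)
The plan is to run a Lacey--Thiele time--frequency analysis adapted to the coupled-scale structure that $\al=\be$ imposes. First I would pass to the associated $4$-linear form,
$$
\La(f_1,f_2,f_3,f_4)=\suml_{k\in\ZZ}\int_{\ZR}H^{\al,k}(f_1,f_2)(x)\,f_3^{\be,k}(x)\,f_4(x)\,dx,
$$
and rewrite it on the Fourier side as an integral of $\wh{f_1}(\xi_1)\wh{f_2}(\xi_2)\wh{f_3}(\xi_3)\wh{f_4}(-\xi_1-\xi_2-\xi_3)$ against the symbol $\suml_k\wh{\Phi_1}\!\big(\tfrac{\xi_1-\xi_2}{2^{\al k}}\big)\wh{\Phi_2}\!\big(\tfrac{\xi_3}{2^{\al k}}\big)$. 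The support hypotheses place, on the $k$-th summand, the three frequency intervals $\om_1\ni\xi_1$, $\om_2\ni\xi_2$, $\om_3\ni\xi_3$ all at the common scale $2^{\al k}$, with $\om_1,\om_2$ separated by a fixed multiple ($\approx 9$) of that scale and $\om_3$ abutting the origin, while the location of the fourth interval $\om_4\ni-\xi_1-\xi_2-\xi_3$ is forced and can be much larger than $2^{\al k}$. Summing over $k$ therefore manufactures a BHT-type singularity along $\{\xi_1=\xi_2\}$ (not a Coifman--Meyer point singularity), which is exactly why the target region is the BHT region $\tfrac1{p_1}+\tfrac1{p_2}<\tfrac32$ while $p_3$ is only required to lie in $(1,\nf)$.

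Next I would perform the standard wave-packet decomposition: smoothly split $\wh{\Phi_1}$ and $\wh{\Phi_2}$, discretize, and (after the customary limiting argument in the discretization parameters) reduce $\La$ to a model sum $\suml_{\vec P}|I_{\vec P}|^{-1/2}\langle f_1,\phi_{P_1}\rangle\langle f_2,\phi_{P_2}\rangle\langle f_3,\phi_{P_3}\rangle\langle f_4,\phi_{P_4}\rangle$ over tri-tiles $\vec P$, in which the four tiles share the time interval $I_{\vec P}$, the boxes $\om_{P_1},\om_{P_2}$ are a bounded number of $|I_{\vec P}|^{-1}$-units apart, $\om_{P_3}$ is a lacunary interval containing $0$, and $\om_{P_4}$ is determined. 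I would then sort the tiles into trees, prove a single-tree estimate dominating a tree's contribution by $|I_{\bT}|$ times a product of the \emph{sizes} and \emph{energies} of $f_1,f_2,f_4$ --- the $f_3$-factor being controlled on each tree by the local average $\langle|f_3|\rangle_{I_{\bT}}$, hence contributing a maximal/square function bounded on every $L^{p_3}$, $p_3\in(1,\nf)$ --- sum over trees by the usual stopping-time selection, combine with the Bessel-type energy inequalities to obtain a restricted weak-type estimate off an exceptional set, valid on an open set of exponents, and finish by multilinear interpolation to cover all of $D$.

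The step I expect to be the main obstacle is the geometric interaction between the bilinear (BHT) and the linear (paraproduct) pieces. The clean $\approx 9\cdot2^{\al k}$ gap between $\om_1$ and $\om_2$ is comfortable, but since the output frequency $\xi_1+\xi_2$ --- hence $\om_4$ --- is unconstrained there are two very different regimes: a \emph{resonant} one, $|\xi_1+\xi_2|\sim 2^{\al k}$, in which all four tiles live at one scale and one faces a genuine BHT; and a \emph{non-resonant} one, $|\xi_1+\xi_2|\gg 2^{\al k}$, in which $P_1,P_2,P_4$ cluster around a high frequency while $P_3$ stays near $0$, so that after a further Littlewood--Paley decomposition in $\xi_1+\xi_2$ each frozen outer scale carries a BHT in $(f_1,f_2)$ at scale $2^{\al k}$ tensored with a low-frequency $f_3$-factor, and the outer sum must be reassembled by almost orthogonality. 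Reconciling the tree estimates across these two regimes --- in particular assigning the ``low'' (size-controlled) and ``high'' (energy-controlled) roles among $f_1,f_2,f_4$ consistently --- while also controlling the convergence of the sum in $k$ (equivalently the overlap of the unavoidably spatially fat $P_3$ wave packets), and doing all of this uniformly up to the boundary $\tfrac1{p_1}+\tfrac1{p_2}=\tfrac32$ where the energy estimate degenerates, is where the real work lies.
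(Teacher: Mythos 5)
This statement is not proved in the present paper at all --- it is quoted from \cite{DL} --- but your outline follows essentially the same Lacey--Thiele scheme used there and mirrored in Sections \ref{section: reduction}--\ref{section: org} here for Theorem \ref{thm: homo}: reduction to a discretized wave-packet model form, tiles and trees, size/energy estimates with a tree-selection algorithm, a single-tree bound in which the low-frequency $f_3$ piece is absorbed through a maximal/square-function estimate, a restricted weak-type bound off an exceptional set, and interpolation to all of $D$. One remark: the ``resonant vs.\ non-resonant'' dichotomy you flag as the main obstacle is not a genuine one, since the location of $\xi_1+\xi_2$ along the singular line is simply a free parameter of the tiles and is handled uniformly by the modulation-invariant tree combinatorics, with no extra Littlewood--Paley decomposition in $\xi_1+\xi_2$ or separate reassembly argument required.
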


\begin{figure} [h]
\begin{center}
\begin{tikzpicture}

\draw[->] (-1.0,0) -- (4,0) coordinate (x axis);
\draw[->] (0,-1.0) -- (0,6) coordinate (y axis);

\node (rect) [rectangle, draw, minimum width=7mm, minimum height=8mm] at (2,3) {1};

\node (rect) [rectangle, draw, minimum width=7mm, minimum height=8mm] at (2,1.5) {2};

\node (rect) [rectangle, draw, minimum width=7mm, minimum height=16mm] at (2,0) {3};

\node (rect) [rectangle, draw, minimum width=7mm, minimum height=16mm] at (2,5) {4};

\end{tikzpicture}
\caption{Tile structure of $T^{\alpha,\beta}$, $\alpha <\beta$, $k\ge 2$}
 \label{fig: tile for T2}
\end{center}
\end{figure}
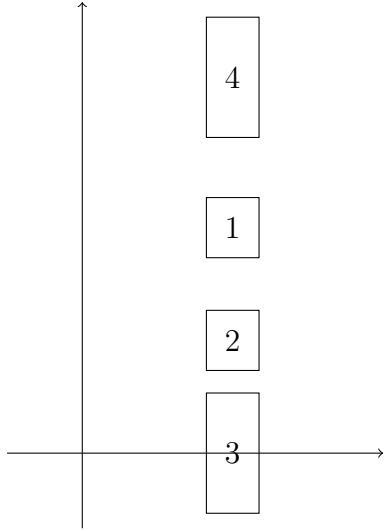
\noindent\textbf{Remarks.} (1) Strictly speaking, this theorem is proved in \cite{DL} only in the case $\al=\beta=1$, but this restriction is inessential. The proof given in \cite{DL} works for any homogeneous-scale case. 

(2) The intervals $[9,10]$ and $[-1,1]$ in the assumptions of Theorem \ref{Thm:DL} are not essential. The point is that $\widehat{\Phi_1}$ should be supported away from $0$ and $\widehat{\Phi_2}$ should be supported near $0$.

(3) We conjectured that the condition $\alpha=\beta$ can be dropped in the above theorem, but the proof given in \cite{DL} relies on the homogeneity of the scales. Let us briefly analyze the difficulties in the case $\alpha\neq \beta$ here. Assume $0<\alpha<\beta$ and let $k\ge 2$ be an integer. After wave packet decomposition, the tile associated with $f_{3}^{\beta,k}$ dominates the other two tiles (associated with $f_1$ and $f_2$) in frequency space as supp $\widehat{f_{3}^{\beta,k}}$ has a much larger scale $2^{\beta k}$. This will also introduce a long tile for the fourth function $f_4$ in the 4-linear form $\langle T^{\alpha}(f_1,f_2,f_3), f_4\rangle$: see Figure \ref{fig: tile for T2}. As there are two long tiles and one of them contains the origin, the situation is difficult to handle even we use telescoping techniques that are powerful in some uniform estimates (\cite{GL,LRev,Thiele 02}).

\subsection{Main result and application}

The purpose of this paper is to investigate other instances of $T^{\alpha,\beta}$, including some non-homogeneous-scale cases. We would like to switch the roles of $\widehat{\Phi_1}$ and $\widehat{\Phi_2}$, i.e. assume that $\widehat{\Phi_1}$ is supported near the origin and $\widehat{\Phi_2}$ is supported away from $0$ (instead of the other way around in Theorem \ref{Thm:DL}). In this case, $H^{\alpha,k}$ is no longer a piece of BHT at certain scale: we may call it a \textit{quasi piece} of BHT. Surprisingly we can obtain the same range of boundedness as before, even in some cases with non-homogeneous scales (See Theorem \ref{thm:non-homo} below). More precisely, we have

\begin{theorem} \label{thm: homo}
Let $\Phi_1$ and $\Phi_2$ be smooth bump functions satisfying supp $\widehat{\Phi_1}\subseteq [-1, 1]$  and supp $\widehat{\Phi_2}\subseteq [9, 10]$. Let $\alpha=\beta\neq 0$. Then the operator $T^{\alpha,\beta}$ defined by \eqref{eq: T}\eqref{eq: H} is bounded from $L^{p_1}\times L^{p_2}\times L^{p_3}$ to $L^p$ for any $(p_1,p_2,p_3)\in D=\{(p_1,p_2,p_3)\in (1, \nf)^3: \frac{1}{p_1}+\frac{1}{p_2}<\frac{3}{2}\}$, $\frac{1}{p_1}+\frac{1}{p_2}+\frac{1}{p_3}=\frac{1}{p}$.
\end{theorem}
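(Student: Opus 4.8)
The plan is to reduce Theorem \ref{thm: homo} to Theorem \ref{Thm:DL} by a summation by parts in the scale parameter $k$, at the cost of an elementary ``paraproduct-free'' term.

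First I would record the pointwise form of the quasi piece: writing $\widehat{\Phi_1}\big(\tfrac{\xi_1-\xi_2}{2^{\al k}}\big)$ as the Fourier transform in $t$ of $2^{\al k}\Phi_1(2^{\al k}t)$ and unwinding \eqref{eq: H} gives
\begin{equation*}
H^{\al,k}(f_1,f_2)(x)=\int_{\ZR}2^{\al k}\Phi_1(2^{\al k}t)\,f_1(x-t)f_2(x+t)\,dt,
\end{equation*}
so $H^{\al,k}$ is a smooth bilinear average at scale $2^{-\al k}$; in particular $H^{\al,k}(f_1,f_2)\to\widehat{\Phi_1}(0)f_1f_2$ pointwise as $k\to+\nf$ on a dense class. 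Since $\textup{supp}\,\widehat{\Phi_2}\subseteq[9,10]$ stays away from the origin, each $f_3^{\be,k}$ is a lacunary Littlewood--Paley piece of $f_3$, and since every summand in \eqref{eq: T} only sees $\widehat{f_3}$ on $\bigcup_k[9\cdot 2^{\be k},10\cdot 2^{\be k}]$ I may assume, after replacing $f_3$ by an $L^{p_3}$-bounded smooth projection, that $\widehat{f_3}$ is supported in that comb; then, writing $\De_k:=(\,\cdot\,)^{\be,k}$ and $S_k:=\sum_{k'\le k}\De_{k'}$, we have $\sum_k\De_kf_3=f_3$, $\|S_kf_3\|_{p_3}\lesssim\|f_3\|_{p_3}$ for $1<p_3<\nf$, and $S_k$ is a Calder\'on--Zygmund-type low-pass localized to frequencies $\lesssim 2^{\be k}$. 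I may also assume (harmlessly: otherwise decompose $\widehat{\Phi_1}$ into annuli, which produces a geometrically convergent sum of scale-skewed copies of what follows) that $\widehat{\Phi_1}$ is constant near $0$, so that $B_k:=H^{\al,k}-H^{\al,k-1}$ has symbol a smooth bump supported in $\{|\xi_1-\xi_2|\sim 2^{\al k}\}$; thus $B_k$ is a genuine piece of $B$ at scale $2^{-\al k}$, of the kind appearing in \cite{DL}.

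Now, with $\al=\be$ and $a_k:=H^{\al,k}(f_1,f_2)$, $b_k:=S_kf_3$, a summation by parts in $k$ (legitimate first on a dense class, the boundary contribution as $k\to-\nf$ vanishing because $b_k\to 0$ there, and the one as $k\to+\nf$ producing the first term below) gives
\begin{equation*}
T^{\al,\be}(f_1,f_2,f_3)=\sum_k a_k(b_k-b_{k-1})
=\widehat{\Phi_1}(0)\,f_1f_2f_3\;-\;\sum_k B_k(f_1,f_2)\,S_{k-1}f_3.
\end{equation*}
The first term is bounded from $L^{p_1}\times L^{p_2}\times L^{p_3}$ to $L^p$ for all $p_1,p_2,p_3\in(1,\nf)$ with $\tfrac1p=\tfrac1{p_1}+\tfrac1{p_2}+\tfrac1{p_3}$ by two applications of H\"older's inequality, with no constraint on $(p_1,p_2)$. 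The second term is, after absorbing the fixed scale shift $k-1\mapsto k$ into the cutoff, exactly an operator of the form \eqref{eq: T}--\eqref{eq: H} in which the role of $\widehat{\Phi_1}$ is played by a bump supported away from $0$ and the role of $\widehat{\Phi_2}$ by a bump (of Calder\'on--Zygmund type) supported near $0$ --- that is, a $T^{\al,\be}$ as in Theorem \ref{Thm:DL}; by that theorem together with the Remarks following it, it maps $L^{p_1}\times L^{p_2}\times L^{p_3}$ to $L^p$ for every $(p_1,p_2,p_3)\in D$. Adding the two estimates proves the theorem.

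The steps that need care, and where I expect the only real work to lie, are: (i) justifying the summation by parts for general $L^{p_j}$ inputs, which is done in the standard way by establishing the a priori bound on a dense subclass and passing to the limit; and, more substantively, (ii) checking that the time-frequency analysis of \cite{DL} --- its single-tree estimate and its tree-selection scheme --- is robust to the fact that here the $f_3$-slot projection $S_{k-1}$ is a Calder\'on--Zygmund-type low-pass (with an $|x|^{-1}$ kernel tail) rather than a compactly Fourier-supported bump, and that the scale-skewing incurred for a general $\widehat{\Phi_1}$ costs at most polynomially in the skew, so that the resulting series $\sum_m 2^{-m}(1+m)^C$ converges. If one prefers not to invoke Theorem \ref{Thm:DL} as a black box, an equivalent route is to rerun the Lacey--Thiele argument directly for $\sum_k B_k(f_1,f_2)S_{k-1}f_3$: the pair $(f_1,f_2)$ carries the genuine wave-packet and modulation structure --- hence the constraint $\tfrac1{p_1}+\tfrac1{p_2}<\tfrac32$ --- while the $f_3$-factor is handled in the Banach range $p_3>1$ by a maximal-function estimate along each tree combined with the Littlewood--Paley bound for $f_3$. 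Point (ii) is the expected main obstacle.
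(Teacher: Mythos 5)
Your route is genuinely different from the paper's (which never passes through Theorem \ref{Thm:DL}: it reduces $T^{1,1}$ directly to a model 4-linear form and runs a Lacey--Thiele argument on a degenerate 4-tile structure), but as written it has a real gap at its central step: the operator produced by your summation by parts is \emph{not} within the hypotheses of Theorem \ref{Thm:DL}, and the two repairs you gesture at are each of the same order of difficulty as the theorem itself. Concretely, the low-pass in the third slot is $S_{k-1}$ with symbol $\widehat{\Psi}\bigl(\xi/2^{\be(k-1)}\bigr)$, where $\widehat{\Psi}(\eta)=\sum_{j\ge 0}\widehat{\Phi_2}(2^{\be j}\eta)$. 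Since $\widehat{\Phi_2}$ is supported in $[9,10]$ and cannot be $\equiv 1$ there, the rescaled bumps accumulate at the origin and $\widehat{\Psi}$ is not smooth at $0$; its kernel decays only like $|x|^{-1}$, exactly as you note. But Theorem \ref{Thm:DL} assumes a fixed \emph{smooth} $\Phi_2$, and its proof (like the present paper's) rests on the wave packet localization \eqref{time interval} with rapidly decaying spatial tails and on summing factors $(1+\mathrm{dist}(\cdot,I_s)/|I_s|)^{-N}$ in the size and tree estimates; with only an $|x|^{-1}$ tail those steps break, so ``checking robustness'' is not a verification but the main work. The alternative of expanding $S_{k-1}=\sum_{j\ge1}\Delta_{k-j}$ does not help: for each fixed $j$ you get an operator whose two cutoffs are both supported away from zero but at scales skewed by $2^{-\be j}$, with no smallness in $j$, i.e.\ precisely the non-homogeneous long-tile configurations that Remark (3) and Figure \ref{fig: tile for T2} of the paper single out as problematic; no cited result gives bounds summable in $j$.

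The same issue infects your reduction to ``$\widehat{\Phi_1}$ constant near $0$.'' The annular pieces do carry a factor $O(2^{-m})$, but what you then need is that the norm of the resulting scale-skewed operator grows at most polynomially in the skew $m$. Theorem \ref{Thm:DL} is a fixed-cutoff statement; neither it nor the remarks after it say anything about how its constant depends on the geometry of the cutoffs, and as $m\to\nf$ the $\xi_1-\xi_2$ separation degenerates relative to the $\xi_3$ scale --- this is uniform-estimates territory (\cite{GL,LRev,Thiele 02}), not a harmless normalization. (A minor slip, easily fixed: $\sum_k\widehat{\Phi_2}(\xi/2^{\be k})\neq 1$ on the comb, so your boundary term is $\widehat{\Phi_1}(0)f_1f_2\,\Pi f_3$ for a Marcinkiewicz multiplier $\Pi$ rather than $f_1f_2f_3$; H\"older still disposes of it.) By contrast, the paper works directly: after the model-form reduction, the 1- and 2-tiles coincide and the 3-tile is a genuine Littlewood--Paley piece; the single-tree estimate (Proposition \ref{lemma: single tree}) uses the $f_3$ square function to produce the factor $|F_3|^{1/p_3}$, and Propositions \ref{lemma: size} and \ref{lemma: org} give the size control and the $\sum_T|I_T|\lesssim\sigma^{-2}$ tree counting, which is where $\frac1{p_1}+\frac1{p_2}<\frac32$ enters. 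Your fallback of ``rerunning the Lacey--Thiele argument directly'' is exactly that proof, so the difficulty you defer to point (ii) is not an obstacle to be expected but the entire content of the theorem.
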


The proof of Theorem \ref{thm: homo} uses Lacey-Thiele's ideas about BHT. However, it should be noted that because of the quasi pieces of BHT, the 4-tile structure of the operator $T^{\al,\al}$ quite different from the tri-tile structure of BHT (see Figure \ref{fig: compare} for a comparison): the loss of one tile ($1$-tile and $2$-tile are identical) forces us to mainly work with only two tiles as opposed to three tiles in BHT. The presence of a Littlewood-Paley piece ($3$-tile), however, will be of great help (see the proof of Proposition \ref{lemma: single tree}).

Using Theorem \ref{thm: homo} together with Theorem \ref{Thm:DL}, we can derive the boundedness property of positive truncations of $T^{\al, \beta}$ in some non-homogeneous-scale cases. 

\begin{theorem} \label{thm:non-homo}
Let $\Phi_1$ and $\Phi_2$ be smooth bump functions satisfying supp $\widehat{\Phi_1}\subseteq [-1, 1]$  and supp $\widehat{\Phi_2}\subseteq [9, 10]$. Assume $\al>\beta>0$. Define a positive truncation of $T^{\al, \beta}$ by

\begin{equation} \label{def positive part}
T^{\al,\beta}_N(f_1,f_2,f_3)(x)=\sum_{k\ge N}H^{\al,k}(f_1,f_2)(x)f_{3}^{\beta, k}(x),~N\in\ZN,
\end{equation}
where $H^{\al,k}$ and $f_3^{\beta,k}$ are given in \eqref{eq: H}. Then for any $N\ge 10\al/\beta$, the operator $T^{\al,\beta}_N$ is bounded from $L^{p_1}\times L^{p_2}\times L^{p_3}$ into $L^p$ for any $(p_1,p_2,p_3)\in D=\{(p_1,p_2,p_3)\in (1, \nf)^3: \frac{1}{p_1}+\frac{1}{p_2}<\frac{3}{2}\}$, $\frac{1}{p_1}+\frac{1}{p_2}+\frac{1}{p_3}=\frac{1}{p}$.
\end{theorem}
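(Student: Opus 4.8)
The plan is to reduce the non-homogeneous positive truncation $T^{\al,\beta}_N$ to the two homogeneous cases already in hand, namely Theorem \ref{Thm:DL} (where $\wh{\Phi_1}$ is away from $0$ and $\wh{\Phi_2}$ is near $0$) and Theorem \ref{thm: homo} (the reversed roles). The key observation is a scale-separation trick: since $\al>\beta>0$, for each $k\ge N$ the factor $f_3^{\beta,k}$ has frequency support in $\{|\xi|\sim 2^{\beta k}\}$ (away from $0$, because supp $\wh{\Phi_2}\subseteq[9,10]$), while $H^{\al,k}(f_1,f_2)$ has its bilinear multiplier $\wh{\Phi_1}\big(\frac{\xi_1-\xi_2}{2^{\al k}}\big)$ supported in $|\xi_1-\xi_2|\le 2^{\al k}$. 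First I would insert a Littlewood--Paley decomposition of $f_3^{\beta,k}$ adapted to the output frequency $\xi_1+\xi_2$ of the bilinear piece: write $\Id = \sum_{j} \psi_j$ where $\psi_j$ localizes $\xi_1+\xi_2$ near $2^{j}$, so that $T^{\al,\beta}_N = \sum_k \sum_j (\text{piece localized at output scale } 2^j) \cdot f_3^{\beta,k}$. Because $f_3^{\beta,k}$ lives at frequency $\sim 2^{\beta k}$ and the trilinear output is at frequency $\xi_1+\xi_2+\xi \sim 2^{\beta k}$ as well (the dominant term, since $|\xi_1+\xi_2|\lesssim 2^{\al k}$ is not necessarily smaller—here one must be careful), the interaction is constrained.

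The cleaner route, which I would pursue as the main line of argument, is to compare $T^{\al,\beta}_N$ directly against $T^{\al,\al}$-type operators. Fix $k\ge N\ge 10\al/\beta$. Then $2^{\al k}$ and $2^{\beta k}$ are both dyadic and $\al k > \beta k$, so $2^{\al k}/2^{\beta k} = 2^{(\al-\beta)k}$ is a large integer power of $2$ (after passing to rational $\al/\beta$ or approximating; the hypothesis $N\ge 10\al/\beta$ guarantees enough separation). The idea is to telescope $\wh{\Phi_1}\big(\tfrac{\xi_1-\xi_2}{2^{\al k}}\big)$, which is a \emph{wide} bump containing the origin, as a sum over finer scales $2^{\al k} = 2^{\beta k}\cdot 2^{(\al-\beta)k}$: namely decompose the smooth bump $\wh{\Phi_1}(\eta/2^{(\al-\beta)k})$ (in the variable $\eta=(\xi_1-\xi_2)/2^{\beta k}$) via a partition of unity into translated bumps $\sum_{\ell}\wh{\Psi}(\eta - 10\ell)$, $|\ell|\lesssim 2^{(\al-\beta)k}$, each supported away from $0$ at scale $2^{\beta k}$ — i.e.\ each term becomes exactly of the type handled by Theorem \ref{Thm:DL}, at the \emph{homogeneous} scale $2^{\beta k}$ for all three inputs, after a modulation $\xi_1\mapsto\xi_1 - 10\ell\cdot 2^{\beta k}$, $\xi_2\mapsto \xi_2 + 10\ell\cdot 2^{\beta k}$ absorbed into $f_1,f_2$. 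The $\ell=0$ term (the one bump straddling the origin) is instead of the type in Theorem \ref{thm: homo}. Summing the operator-norm bounds over $|\ell|\lesssim 2^{(\al-\beta)k}$ gives a bound $\lesssim 2^{(\al-\beta)k}$ for the $k$-th slice — which is \emph{not} summable in $k$, so this crude approach fails and one must instead exploit orthogonality/decay in $\ell$.

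Therefore the real argument must keep the full operator together and run Lacey--Thiele time-frequency analysis once, as in the proof of Theorem \ref{thm: homo}, but now tracking the extra scale parameter. Concretely: perform a wave-packet decomposition of $T^{\al,\beta}_N$ into tiles; the constraint $k\ge N$ (positivity of the truncation) plus $\al>\beta$ forces the $f_3$-tile to always dominate and — crucially — forces the fourth (dual) function's tile and the $f_3$-tile to be comparable long tiles that are \emph{separated from the origin} (this is exactly where the truncation $k\ge N$ saves us, in contrast with Remark (3) where the untruncated sum has a long tile through the origin). One then organizes tiles into trees, proves a single-tree estimate (mimicking Proposition \ref{lemma: single tree}, using the $3$-tile Littlewood--Paley factor), and a tree-counting/size-energy estimate, and combines via the standard interpolation to get the full range $D$. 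The hypothesis $N\ge 10\al/\beta$ is used to ensure that for every $k\ge N$ the $2^{\al k}$-scale bump on $\xi_1-\xi_2$, although it contains the origin, is nonetheless dominated in the relevant regions by the $2^{\beta k}$-scale localization coming from $f_3$, so that effectively only two genuinely distinct scales appear and the $4$-tile analysis of Theorem \ref{thm: homo} applies uniformly in $k$.

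\medskip
\noindent\textbf{Main obstacle.} The hard part is the interaction of the two long tiles: the one for $f_4$ (forced by $f_3^{\beta,k}$'s large scale $2^{\beta k}$) and the wide multiplier bump on $\xi_1-\xi_2$ at the even larger scale $2^{\al k}$. One must show that the positivity restriction $k\ge N$ with $N\ge 10\al/\beta$ prevents either long tile from sitting at the origin simultaneously in a way that breaks the size/energy estimates — i.e.\ that the truncation genuinely removes the obstruction described in Remark (3). Verifying that the single-tree estimate survives this, with constants uniform in $k$ and summable after the tree decomposition, is where the bulk of the work lies; everything else is a routine adaptation of the machinery already developed for Theorems \ref{Thm:DL} and \ref{thm: homo}.
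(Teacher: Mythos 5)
Your main line of attack --- running the Lacey--Thiele tile analysis directly on the non-homogeneous operator and claiming that the truncation $k\ge N\ge 10\al/\beta$ ``forces the long tiles to be separated from the origin'' --- contains a genuine gap, and the claimed mechanism is not correct. For every $k\ge N$ the multiplier $\wh{\Phi_1}\bigl((\xi_1-\xi_2)/2^{\al k}\bigr)$ still contains the diagonal $\xi_1=\xi_2$, and $\xi_1,\xi_2$ themselves are unrestricted; after wave packet decomposition the $f_1$-, $f_2$- and $f_4$-tiles are identical long tiles of length $\sim 2^{\al k}$ that may sit anywhere, including over the origin, while the $f_3$-tile has the much smaller length $\sim 2^{\beta k}$. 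This is exactly the configuration of Figure \ref{fig: before tele}, which is drawn already in the truncated regime $k\ge 1$: the truncation does not change the tile geometry at any fixed scale, so it cannot remove the obstruction of Remark (3). With three coincident long tiles there is no frequency separation between $\om_{s_1}$ and $\om_{s_4}$ at comparable scales, so the tree selection, the size/energy estimates, and the single-tree orthogonality of Sections \ref{section: pf}--\ref{section: org} do not carry over ``uniformly in $k$'' as you assert; you never supply the argument that would replace them, and the condition $N\ge 10\al/\beta$ plays no role of the kind you attribute to it (it is only a technical convenience ensuring $[\tfrac{\beta}{\al}k]$ is large).

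The missing idea is a telescoping in \emph{scale} rather than the translation-based partition you tried and rightly discarded. Write, for each $k\ge N$,
\begin{equation*}
\wh{\Phi_1}\Bigl(\tfrac{\xi_1-\xi_2}{2^{\al k}}\Bigr)
=\wh{\Phi_1}\Bigl(\tfrac{\xi_1-\xi_2}{2^{\al[\frac{\beta}{\al}k]}}\Bigr)
+\sum_{j=0}^{[(1-\frac{\beta}{\al})k]-1}\Bigl[\wh{\Phi_1}\Bigl(\tfrac{\xi_1-\xi_2}{2^{\al(k-j)}}\Bigr)-\wh{\Phi_1}\Bigl(\tfrac{\xi_1-\xi_2}{2^{\al(k-j-1)}}\Bigr)\Bigr],
\end{equation*}
so that $T^{\al,\beta}_N=A+B$ with $B=\sum_{k\ge N}H^{\beta,k}(f_1,f_2)\,f_3^{\beta,k}$ a part of the homogeneous operator $T^{\beta,\beta}$, which is covered by Theorem \ref{thm: homo} (whose proof works for any subset of scales). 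Each difference in $A$ is a genuine BHT piece, supported where $|\xi_1-\xi_2|\sim 2^{\al(k-j)}$, i.e.\ away from the origin at its own scale; after the change of variables $k\to k+j$ the accompanying sum $\sum_j f_3^{\beta,k+j}$ itself telescopes into a difference $f_3^{\al k}-f_3^{\beta k}$ of two low-pass cutoffs at scales $\le 2^{\al k}$. This exact recombination is what defeats the loss you encountered when summing $\lesssim 2^{(\al-\beta)k}$ translated bumps: the $\sim k$ annular pieces resum with no loss, and the two resulting operators have the BHT-type tri-tile structure of Theorem \ref{Thm:DL} (multiplier away from zero in $\xi_1-\xi_2$, third function low-passed), so that proof applies; the finitely many leftover index pairs are an error term controlled by H\"older and the Lacey--Thiele theorem. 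Without this reduction, your proposal stops at precisely the point where the work begins.
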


\noindent\textbf{Remarks.} (1) The choice of intervals $[-1,1]$ and $[9,10]$ in the above two theorems are not important. The key is that $\widehat{\Phi_1}$ should be supported near $0$ and $\widehat{\Phi_2}$ should be supported away from $0$.

(2) One of anticipated applications of Theorem \ref{thm:non-homo} is to use boundedness of $T^{d,1}_N$ to prove that of one prototype of THT along polynomial curves 

$$
T^C(f_1,f_2,f_3)(x)=p.v.\int_{-1}^1 f_1(x-t)f_2(x-t^d)f_3(x+t^d)\,\frac{dt}{t}.
$$ 
Just like the relationship between $H_\Gamma (f_1,f_2)(x)=p.v.\int f_1(x-t)f_2(x-t^d)\frac{1}{t}\, dt$ and the paraproduct $\Pi_\Gamma(f_1,f_2)=\sum_k f_{1k}f_{2k}$ studied in \cite{LNYJ}, $T^C$ can be written as the sum of finitely many operators of the form $T^{\al,\beta}_N$ (plus some other terms). The condition $N\ge 10\al/\beta$ in Theorem \ref{thm:non-homo} is assumed only for technical reasons and it does not affect the application as each scale of $T^C$ (after the standard dyadic decomposition $\frac{1}{t}=\sum_{k}\rho_k(t)$) is trivially bounded. The reason that we only consider the positive truncation instead of $T^{\al,\be}$ itself is that $|t|\le 1$ in the definition of $T^C$.

(3) Under the assumptions on $\widehat{\Phi_1}$ and $\widehat{\Phi_2}$ in Theorem \ref{thm:non-homo}, Figure \ref{fig: before tele} illustrates the worst case of the tri-tile structure of $T^{\al,\beta}_N$ with $\al>\be$ at any positive scale $k$. The two identical long tiles seems to be very problematic. The key to resolve this issue is to reduce the study of $T^{\al,\beta}_N$ with $\al>\beta$ to that of $T^{\beta,\beta}$ (homogeneous case) by a telescoping argument. The details are provided in Section \ref{section: telescoping}.

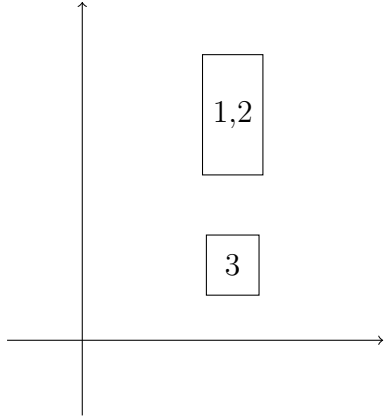
\begin{figure} [h]
\begin{center}
\begin{tikzpicture}

\draw[->] (-1.0,0) -- (4,0) coordinate (x axis);
\draw[->] (0,-1.0) -- (0,4.5) coordinate (y axis);

\node (rect) [rectangle, draw, minimum width=7mm, minimum height=8mm] at (2,1) {3};

\node (rect) [rectangle, draw, minimum width=7mm, minimum height=16mm] at (2,3) {1,2};

\end{tikzpicture}
\caption{tri-tile structure of $T^{\al,\beta}_N$, $\al>\beta>0$, $k\ge 1$}
\label{fig: before tele}
\end{center}
\end{figure}

\subsection{Notations}
%The rest of the paper is organized as follows. In section \ref{section: telescoping} we show how to derive Theorem \ref{thm:non-homo} using Theorem \ref{thm: homo} by a telescoping argument. The remaining four sections contain the proof of Theorem \ref{thm: homo}. 
Throughout the paper we will use $C$ to denote a positive constant whose value may change from line to line. We may add one or more subscripts to $C$ to emphasize dependence of $C$. $A\lesssim B$ is short for $A\le CB$ and $A\lesssim_N B$ means $A\le C_NB$. If $A\lesssim B$ and $B\lesssim A$, then we write $A\simeq B$. $\hichi_E$ and $|E|$ will be used to denote the characteristic function and the Lebesgue measure of the set $E$, respectively.

\section{Reduction to Model Form} \label{section: reduction}
\setcounter{equation}0
The goal of this section is to reduce Theorem \ref{thm: homo} to the study of a model form using standard wave packet decomposition process. For notational convenience, we assume $\al=\beta=1$ in the proof. The general case can be handled the same way. 

Let $\cs(\ZR)$ denote the class of Schwartz functions on $\ZR$. Given $f_j\in\cs(\ZR)$, $j\in\{1,2,3,4\}$, consider the 4-linear form $\La$ associated with $T^{1,1}$

\begin{equation} \label{intro of La}
\begin{split}
&\La (f_1,f_2,f_3,f_4):=\int T^{1,1}(f_1,f_2,f_3)(x)\overline{f_4}(x)\, dx\\
&=\sum_{k\in\ZZ}\iiint \widehat{f_1}(\xi_1)\widehat{f_2}(\xi_2)\widehat{f_3}(\xi_3)\widehat{\Phi_1}\left(\frac{\xi_1-\xi_2}{2^k}\right)\widehat{\Phi_2}\left(\frac{\xi_3}{2^k}\right)\overline{\widehat{f_4}}(\xi_1+\xi_2+\xi_3)\,d\xi_1d\xi_2d\xi_3,
\end{split}
\end{equation}
where supp $\widehat{\Phi_1}\subseteq [-1, 1]$  and supp $\widehat{\Phi_2}\subseteq [9, 10]$. 

To simplify the 4-linear form above, we use the wave packet decomposition. Choose a $\psi \in\cs(\ZR)$ such that supp$\widehat{\psi}\subseteq [0,1]$ and 

$$
\sum_{l\in\ZZ}\widehat{\psi}\left(\xi-\frac{l}{2}\right)=1 \text{ for any } \xi\in\ZR.
$$  
Define 

$$
\widehat{\psi_{k,l}}(\xi):=\widehat{\psi}\left(\frac{\xi-2^{k-1}l}{2^k}\right) \text{ for } (k,l)\in \ZZ^2. 
$$ 
Pick a non-negative $\vp\in\cs(\ZR)$ with supp$\widehat{\vp}\subseteq [-1,1]$ and $\widehat{\vp}(0)=1$. Let 

$$
\vp_k(x):=2^k\vp(2^kx), k\in\ZZ.
$$ 
For every $(k,n)\in\ZZ^2$, denote $I_{k,n}:=[2^{-k}n, 2^{-k}(n+1))$. Then for each scale $k\in\ZZ$ and any function $f\in\cs(\ZR)$, we have

\begin{equation} \label{eq: WPD}
f=\sum_{(n,l)\in\ZZ^2}f_{k,n,l},
\end{equation}
where 

\begin{align}
&f_{k,n,l}(x):=\hichi_{I_{k,n}}^*(x)f*\psi_{k,l}(x), \text{ and } \label{f}\\ 
&\hichi_I^*(x):=\hichi_I*\vp_k(x) \text{ for any interval } I. \label{chi I}
\end{align}

In sum, $f_{k,n,l}$ is well-localized, as supp $\widehat{f_{k,n,l}}\subseteq [2^k(\frac{l}{2}-1), 2^k(\frac{l}{2}+2)]$ and $f_{k,n,l}$ is essentially supported on $I_{k,n}$ in the sense that

\begin{equation}
|f_{k,n,l}(x)|\lesssim_{N,M} \left(1+\frac{\text{dist}(x, I_{k,n})}{|I_{k,n}|}\right)^{-N}\frac{1}{|I_{k,n}|}\int |f(y)| \left(1+\frac{|x-y|}{|I_{k,n}|} \right)^{-M} \,dy.
\end{equation} 

Now we apply the decomposition \eqref{eq: WPD} to all the four functions in \eqref{intro of La} and obtain 

\[
\begin{split}
\La(f_1,f_2,f_3,f_4)=&\sum_{\substack{k\in\ZZ \\(n_1,n_2, n_3,n_4)\in\ZZ^4\\(l_1,l_2, l_3,l_4)\in\ZZ^4}}\iiint  \widehat{(f_1)_{k,n_1,l_1},}(\xi_1)\widehat{(f_2)_{k,n_2,l_2}}(\xi_2)\widehat{(f_3)_{k,n_3,l_3}}(\xi_3) \\
& \widehat{\Phi_1}\left(\frac{\xi_1-\xi_2}{2^k}\right)\widehat{\Phi_2}\left(\frac{\xi_3}{2^k}\right)\overline{\widehat{(f_4)_{k,n_4,l_4}}}(\xi_1+\xi_2+\xi_3)\,d\xi_1d\xi_2d\xi_3.
\end{split}
\]
By the support of functions, each term in the sum is non-zero only when 

\[
\begin{cases}
\xi_i\in[2^k(\frac{l_i}{2}-1),2^k(\frac{l_i}{2}+2)]  \text{ for } i=1,2,3; \\
|\xi_1-\xi_2|\lesssim 2^k, |\xi_3|\in[9\cdot 2^k,10\cdot 2^k);\\
\xi_1+\xi_2+\xi_3\in [2^k(\frac{l_4}{2}-1),2^k(\frac{l_4}{2}+2)].
\end{cases}
\]
These imply that 

\[
\begin{cases}
|l_2-l_1|\lesssim 1;\\
|l_3-9|\lesssim 1;\\
|l_4-(2l_1-18)|\lesssim 1.
\end{cases}
\]
In other words, among the four parameters $l_1,l_2,l_3,l_4$ only one is free, say $l_1$. Without loss of generality we can fix a dependence relation between $l_2,l_3,l_4$ and $l_1$. Then drop the cut-off functions by the Fourier expansion trick and ignore the fast decay terms so that $\La(f_1,f_2,f_3,f_4)$ becomes essentially as

\[
\sum_{\substack{k, l_1\\ n_1,n_2,n_3,n_4}}\int (f_1)_{k,n_1,l_1}(x)(f_2)_{k,n_2,l_2}(x)(f_3)_{k,n_3,l_3}(x)\overline{(f_4)_{k, n_4,l_4}}(x)\,dx.
\]
Since $(f_j)_{k,n_j,l_j}$ is almost supported in $I_{k,n_j}=[2^{-k}n_j,2^{-k}(n_j+1))$, there is not too much loss to assume $n_1= n_2=n_3=n_4$ due to the fast decay in other cases. Therefore, the original 4-linear form has been simplified to the following model form (we still use $\La$ to denote the model 4-linear form by an abuse of notation):

\begin{equation} \label{eq: model}
\La(f_1,f_2,f_3,f_4)=\sum_{(k,n,l)\in\ZZ^3}\int \prod_{j=1}^4 (f_j)_{k,n,l_j}(x)\, dx.
\end{equation} 
Here $l_1=l$, $l_2=l$, $l_3=18$ and $l_4=2l+18$.

We will prove directly that $T$ is of restricted weak type  (see \cite{MS} for the definition) when $(p_1, p_2, p_3)$ is in a smaller range $D_0:=\{(p_1,p_2,p_3): 1<p_1, p_2<2, \frac{1}{p_1}+\frac{1}{p_2}<\frac{3}{2}, p_3\in(1,\nf)\}$. More precisely, we will prove
\begin{theorem} \label{Thm: model}
  Let $(p_1,p_2,p_3)\in D_0$. For any measurable sets $F_1, F_2, F_3, F$ of finite measure, there exists measurable set $F'\subseteq F$ with $|F'|\ge \frac{1}{2}|F|$ such that $\La$ defined in \eqref{eq: model} satisfy 
  
\begin{equation} \label{eq: RWT}
  |\La(f_1,f_2,f_3,f_4)|\lesssim |F_1|^{\frac{1}{p_1}}|F_2|^{\frac{1}{p_2}}|F_3|^{\frac{1}{p_3}}|F'|^{\frac{1}{p'}}; 
\end{equation}
for every $|f_1|\le\hichi_{F_1}$, $|f_2|\le\hichi_{F_2}$, $|f_3|\le\hichi_{F_3}$ and $|f_4|\le\hichi_{F'}$. Here $\frac{1}{p'}:=1-(\frac{1}{p_1}+\frac{1}{p_2}+\frac{1}{p_3})$.
\end{theorem}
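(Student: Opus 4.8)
The plan is to run the standard Lacey--Thiele time-frequency argument, but adapted to the degenerate 4-tile structure of $\La$ in which the $1$-tile and $2$-tile coincide and the $3$-tile is a Littlewood--Paley piece. First I would discretize: since $f_3$ contributes a fixed Littlewood--Paley frequency band ($l_3=18$), the form $\La$ is governed by the single free parameter $l$ together with the spatial index $n$ and the scale $k$. Organize the triples $(k,n,l)$ into \emph{tiles} $P$ and then into \emph{trees} $\mathcal{T}$, where a tree is a collection of tiles sharing a common ``top'' tile in the sense that the time-frequency boxes are nested and the top dominates in frequency. Following the usual bookkeeping, reduce \eqref{eq: RWT} to a sum over a collection of trees whose tops have controlled overlap, so it suffices to prove (i) a single-tree estimate and (ii) a tree-counting (Bessel/energy) estimate, and then combine them via the standard interpolation between restricted-weak-type bounds to cover all of $D_0$.

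For the single-tree estimate --- which I expect to be where the loss of one tile actually helps rather than hurts --- I would fix a tree $\mathcal{T}$ and bound $\sum_{P\in\mathcal{T}} \int \prod_{j=1}^4 (f_j)_{k,n,l_j}$ by a product of $L^2$-type and $L^\infty$-type local quantities: an ``energy'' for the lacunary pair $(f_1,f_2)$ (which, because $l_1=l_2$, can be estimated by a square function/Littlewood--Paley argument rather than by the more delicate BHT one-tree estimate), a similar energy for $f_4$, and an $L^\infty$ ``size'' for the Littlewood--Paley piece $f_3$, all localized to the tree top $I_{\mathcal{T}}$. The presence of the genuine Littlewood--Paley factor $(f_3)_{k,n,18}$ means that for a fixed tree top the relevant scales $k$ are tied to the spatial interval, so the sum over $k$ within a tree telescopes/is almost orthogonal, and Cauchy--Schwarz in $k$ together with the almost-orthogonality of the wave packets gives a clean bound of the shape $|I_{\mathcal{T}}| \cdot \mathrm{size}_1(\mathcal{T})\,\mathrm{size}_2(\mathcal{T})\,\mathrm{size}_3(\mathcal{T})\,\mathrm{size}_4(\mathcal{T})$ with the sizes being $L^p$-normalized local averages of $f_1,f_2,f_3,f_4$. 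This is the content of Proposition \ref{lemma: single tree} referenced in the excerpt.

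Next I would set up the sizes and energies for $f_1,f_2,f_3,f_4$ in the usual way, prove the Bessel-type inequality bounding the number of disjoint trees of a given energy level (this is essentially the $L^2$ orthogonality of wave packets, again simpler here because only one truly oscillatory frequency parameter $l$ is in play), and then perform the standard \emph{selection algorithm}: repeatedly extract maximal trees of decreasing size, bound the contribution of each batch using the single-tree estimate times the tree count, and sum the resulting geometric series. The exceptional set $F'\subseteq F$ with $|F'|\ge \frac12 |F|$ is carved out exactly as in Lacey--Thiele, by removing the region where a maximal-function/square-function average of $\hichi_{F_1},\hichi_{F_2},\hichi_{F_3}$ relative to $|F|$ is too large, which forces favorable size bounds on the surviving tiles. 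Interpolating the resulting estimates over the vertices of $D_0$ (and using that $p_3$ may range over all of $(1,\infty)$ because $f_3$ only enters through an $L^\infty$-type size controlled by a maximal function) yields \eqref{eq: RWT} throughout $D_0$.

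The main obstacle, as flagged in the introduction, is the degeneracy: with effectively only two independent tiles instead of three, the classical BHT ``two out of three'' exponent trade-offs are not directly available, so the single-tree and energy estimates must be re-derived using the Littlewood--Paley structure of the $3$-tile as a substitute for the missing tile. Concretely, the delicate point is to show that exploiting $(f_3)_{k,n,18}$ as a Littlewood--Paley projection (rather than a modulated wave packet) recovers enough orthogonality in the scale parameter $k$ to close the single-tree bound; once that is in hand, the rest of the machinery is routine adaptation of \cite{LT97,LT99}.
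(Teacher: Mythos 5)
Your proposal follows essentially the same route as the paper: wave-packet discretization into 4-tiles, an exceptional set removed via maximal functions of $\hichi_{F_1},\hichi_{F_2},\hichi_{F_3}$ (the latter at exponent $1/p$), size estimates, a single-tree bound exploiting the Littlewood--Paley nature of the $3$-tile, a tree-selection/Bessel-type counting argument, and summation over dyadic size levels. The only cosmetic differences are in bookkeeping: the paper's single-tree estimate uses Cauchy--Schwarz to place square functions on exactly two of the four factors ($f_3$ with $f_4$ in a $1$-tree, or $f_3$ with $f_1$ in a $4$-tree) rather than on $f_1,f_2,f_4$ simultaneously, and it closes the estimate by summing size levels directly via $\min\{X,Y\}\le X^{\theta}Y^{1-\theta}$ and $\frac{1}{p_1}+\frac{1}{p_2}<\frac{3}{2}$ rather than by interpolating over vertices of $D_0$.
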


To prove Theorem \ref{Thm: model} we pick up an arbitrary finite subset $S\subset \ZZ^3$ and aim to obtain \eqref{eq: RWT} for

\begin{equation} \label{eq: model over S}
 \La_S(f_1,f_2,f_3,f_4):=\sum_{(k,n,l)\in S}\int \prod_{j=1}^4 (f_j)_{k,n,l_j}(x)\, dx,
\end{equation} 
provided the bound does not depend on the set $S$. We can also assume $|F|=1$ by dilation invariance. Next we make the geometric structure of $\La_S$ clearer. To each tuple $s=(k,n,l)\in \ZZ^3$ we assign a time-interval $I_s:=I_{k,n}$ and four frequency-intervals $\om_{s_j}$, $j\in\{1,2,3,4\}$, representing the localization of functions in the time-frequency space. More precisely, $I_s$ and $\om_{s_j}$'s satisfy:

\begin{align}
&(f_j)_{k,n,l_j}(x) \text{ is dominated by } \label{time interval} \\ 
&C_{N,M}\left(1+\frac{\text{dist}(x, I_s)}{|I_s|} \right)^{-N}\frac{1}{|I_s|}\int |f_j(y)| \left(1+\frac{|x-y|}{|I_s|} \right)^{-M} \,dy \notag \\
&\text{The Fourier transform of }(f_j)_{k,n,l_j} \text{is supported on } \om_{s_j}. \label{intersect} 
\end{align}
\begin{definition}
We call $s=(k,n,l)$ a \textit{4-tile} (or simply a tile) as it corresponds to 4 single-tiles $s_j:=I_s\times \om_{s_j}$, $j\in\{1,2,3,4\}$. Write $f_{s_j}:=f_{k,n,l_j}$ for simplicity.
\end{definition}

We can take finitely many sparse subsets of $S$ and transform $\om_{s_j}$'s by fixed affine mappings if needed (since only relative locations of Fourier supports matter) so that $I_s$ and $\om_{s_j}$'s enjoy nice geometric properties as follows:

\begin{align}
&\om_{s_1}=\om_{s_2}; \label{first property of tiles}\\
&|\om_{s_1}|=|\om_{s_3}|=|\om_{s_4}|=C|I_s|^{-1}; \\
&\text{dist}(\om_{s_1},\om_{s_4})=|\om_{s_1}|;\\
&c(\om_{s_1})>c(\om_{s_4}), \text{ where $c(I)$ is the center of the interval $I$};\\
&\{I_s\}_{s\in S} \text{ is a grid (defined below)};\\
&\{\om_{s_1}\cup\om_{s_4}\}_{s\in S} \text{ is a gird};\\
&\om_{s_i}\subsetneqq J \text{ for some } i\in\{1,4\}, J:=\om_{s'_1}\cup\om_{s'_2}\cup\om_{s'_4}, s'\in S \Rightarrow \label{all in}\\
&\om_{s_j}\subseteq J \text{ for all } j\in\{1,4\}. \notag 
\end{align}
Here a grid is defined as a set of intervals having the property that if two different elements intersect then one must contain the other and the larger interval is at least twice as long as the smaller one. See \cite{LT97} for a detailed construction of the time and frequency intervals. 

From now on we fix a finite set of tiles $S\subset\ZZ^3$ and assume the tiles satisfy \eqref{time interval}-\eqref{all in}. See Figure \ref{fig: compare} for a comparison between the tile structure of $T^{1,1}$ and that of BHT. 

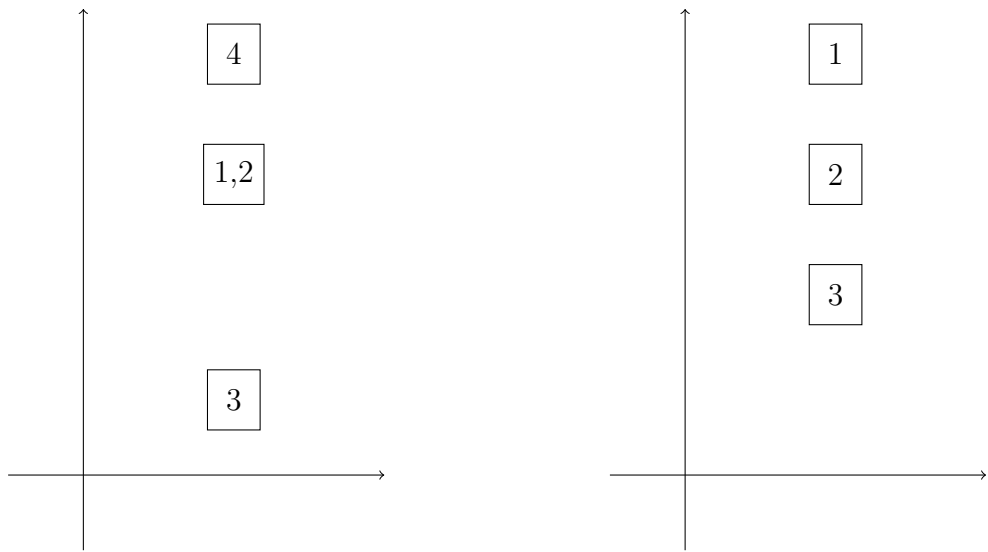
\begin{figure} [h]
\begin{center}
\begin{tikzpicture}

\draw[->] (7.0,0) -- (12,0) coordinate (x axis);
\draw[->] (8,-1.0) -- (8,6.2) coordinate (y axis);

\node (rect) [rectangle, draw, minimum width=7mm, minimum height=8mm] at (10,2.4) {3};

\node (rect) [rectangle, draw, minimum width=7mm, minimum height=8mm] at (10,4) {2};

\node (rect) [rectangle, draw, minimum width=7mm, minimum height=8mm] at (10,5.6) {1};

\draw[->] (-1.0,0) -- (4,0) coordinate (x axis);
\draw[->] (0,-1.0) -- (0,6.2) coordinate (y axis);

\node (rect) [rectangle, draw, minimum width=7mm, minimum height=8mm] at (2,1) {3};

\node (rect) [rectangle, draw, minimum width=7mm, minimum height=8mm] at (2,4) {1,2};

\node (rect) [rectangle, draw, minimum width=7mm, minimum height=8mm] at (2,5.6) {4};

\end{tikzpicture}
\caption{$4$-tile of $T^{1,1}$ vs. tri-tile of BHT}
\label{fig: compare}
\end{center}
\end{figure}

Theorem \ref{Thm: model} has been reduced to the following theorem. 
\begin{theorem} \label{Thm: model2}
 Let $p>1$ be arbitrary. Given any $(p_1,p_2,p_3)\in D_0$ with $p_3\ge p$ and any sets of finite measure $F_1,F_2,F_3,F$ with $|F|=1$, there exists $F'\subseteq F$ with $|F'|\ge\frac{1}{2}$ such that
 
\[
  |\La_S(f_1,f_2,f_3,f_4)|\lesssim |F_1|^{\frac{1}{p_1}}|F_2|^{\frac{1}{p_2}}|F_3|^{\frac{1}{p_3}} 
\]
for every $|f_1|\le\hichi_{F_1}$, $|f_2|\le\hichi_{F_2}$, $|f_3|\le\hichi_{F_3}$ and $|f_4|\le\hichi_{F'}$.
\end{theorem}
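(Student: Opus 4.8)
The plan is to adapt the Lacey--Thiele tree/size/energy machinery to this 4-tile setting, working with the model form $\La_S$ in \eqref{eq: model over S}. First I would set up the exceptional set: since $|F|=1$, define $F'=F\setminus E$ where $E=\{x: M(\hichi_{F_1})(x)>C|F_1| \text{ or } M(\hichi_{F_2})(x)>C|F_2| \text{ or } M(\hichi_{F_3})(x)>C|F_3|\}$ with $M$ the Hardy--Littlewood maximal operator (and, if needed, a maximal-operator version adapted to the tree structure). Choosing $C$ large enough guarantees $|F'|\ge\frac12$. The role of $F'$ is that any tile $s$ whose time interval $I_s$ contributes nontrivially to $\La_S$ (i.e. $I_s$ is not essentially swallowed by $E$) satisfies good local density estimates: $\frac{1}{|I_s|}\int_{I_s}\hichi_{F_j}\lesssim |F_j|$ for $j=1,2,3$, as well as a bound involving $|f_4|\le\hichi_{F'}$.

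Next I would introduce the two relevant quantities. For a tree $\mathbf T$ (a collection of tiles with common top, ordered by the usual BHT order on the single tiles $s_1$ and $s_4$), define the \emph{size} of a function $f_j$ relative to a collection $\mathbf P$ of tiles as $\text{size}_{\mathbf P}(f_j)=\sup_{\mathbf T\subseteq\mathbf P}\big(\frac{1}{|I_{\mathbf T}|}\sum_{s\in\mathbf T}\big|\langle f_j,\varphi_{s_j}\rangle\big|^2\big)^{1/2}$ where the sup is over trees whose ``lacunary'' frequency component corresponds to index $j$, and the \emph{energy} as the dual $\ell^2$-type quantity over disjoint tree decompositions. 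The two key lemmas are: (i) a single-tree estimate $|\La_{\mathbf T}(f_1,f_2,f_3,f_4)|\lesssim |I_{\mathbf T}|\prod_{j=1}^4 \text{size}_{\mathbf T}(f_j)$, and (ii) size and energy bounds, $\text{size}_{\mathbf P}(f_j)\lesssim \min\{\,|F_j|^{1/p}\,(\text{geometric factor}),\ \sup_s |I_s|^{-1}\int_{I_s}|f_j|\,\}$ and $\|f_j\|_{\text{energy}}\lesssim |F_j|^{1/2}$, the latter coming from an orthogonality/Bessel argument since the tops of a disjoint tree decomposition have disjoint frequency supports at comparable scales. A crucial simplification here, highlighted in the excerpt, is that single tiles $s_1$ and $s_2$ coincide, so the bilinear-Hilbert obstruction effectively reduces to \emph{two} competing frequency components ($s_1{=}s_2$ and $s_4$), with the $3$-tile being a genuine Littlewood--Paley piece: its frequency interval $\om_{s_3}$ sits at a fixed location (around $2^k\cdot 18/2$) of comparable size, so $\sum_k (f_3)_{k,n,18}$ is controlled by a square function, which makes the $f_3$-factor behave like a maximal/square-function term rather than a modulation-invariant one. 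This is exactly what should be exploited in proving Proposition~\ref{lemma: single tree} and in getting a favorable size estimate for $f_3$ that lets $p_3$ range over all of $(1,\infty)$.

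I would then run the standard greedy stopping-time decomposition: partition $S$ into collections $\mathbf P_{j,m}$ on which $\text{size}_{\mathbf P_{j,m}}(f_j)\sim 2^{-m}$ for each $j\in\{1,2,4\}$ (and similarly control the $f_3$ contribution), each such collection being a union of $O(2^{2m}\cdot(\text{a measure factor}))$ trees via the standard tree-selection lemma. Summing the single-tree estimate over trees in $\mathbf P_{j,m}$ and then over $m$ gives, after interpolating between the two ways of bounding each size (the ``Bessel'' bound $\lesssim |F_j|^{1/2}/|I|^{1/2}$ and the ``$L^\infty$'' bound $\lesssim |F_j|$), the desired product $|F_1|^{1/p_1}|F_2|^{1/p_2}|F_3|^{1/p_3}$ after choosing the interpolation exponents $\theta_1,\theta_2,\theta_3$ with $\theta_1+\theta_2+\theta_3=1$ matching $1/p_j$; the constraint $\frac{1}{p_1}+\frac{1}{p_2}<\frac32$ with $1<p_1,p_2<2$ is precisely what keeps all the geometric series convergent, and $p_3\ge p$, $p>1$ arbitrary, is what lets $1/p_3$ absorb the leftover exponent.

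The main obstacle I anticipate is the single-tree estimate (Proposition~\ref{lemma: single tree}) together with the energy estimate, because the 4-tile geometry is genuinely different from BHT: with $s_1=s_2$ we have two functions $f_1,f_2$ living on the \emph{same} single tile, so one cannot directly invoke the BHT trilinear-tree lemma; one must instead handle the product $(f_1)_{s_1}(f_2)_{s_1}\overline{(f_4)_{s_4}}$ against the Littlewood--Paley piece $(f_3)_{s_3}$, using that on a tree the $f_3$-piece can be summed in $k$ (telescoping / square-function) while $f_1,f_2$ on the lacunary side are controlled by sizes and $f_4$ on the non-lacunary side is controlled by its energy/maximal function. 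Getting the clean bound $|I_{\mathbf T}|\prod_j \text{size}(f_j)$ with the right interplay — in particular making sure the presence of two long tiles (the $\{1,2\}$-tile and the $4$-tile in Figure~\ref{fig: compare}) does not destroy the argument — is where the real work lies; everything after that is the routine Lacey--Thiele summation.
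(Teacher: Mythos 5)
Your plan follows essentially the same route as the paper: an exceptional set built from maximal functions of $\hichi_{F_1},\hichi_{F_2},\hichi_{F_3}$ with $F'=F\setminus\Om$, sizes over trees for the indices $j\in\{1,2,4\}$, a single-tree estimate in which the $3$-tile is exploited as a Littlewood--Paley piece so that the $f_3$ factor enters simply as $|F_3|^{1/p_3}$, a tree-selection (Bessel/energy-type) lemma, and summation via $\min\{X,Y\}\le X^{\theta}Y^{1-\theta}$ using $\frac{1}{p_1}+\frac{1}{p_2}<\frac{3}{2}$. The only cosmetic differences are that the paper organizes the counting through a stopping-time proposition (with the decay in the distance-to-$\Om^c$ parameter $\mu$ providing the summable factor $\mu^{-2}$) rather than a separately named energy, and it thresholds $\tM\hichi_{F_3}>C|F_3|^{1/p}$ so that $f_3$ is kept entirely outside the size/interpolation machinery.
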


\section{Proof of Theorem \ref{thm: homo}} \label{section: pf}
\setcounter{equation}0

In this section we prove Theorem \ref{Thm: model2} and hence Theorem \ref{thm: homo}, using some propositions whose proof will be given in subsequent sections. Fix $p>1$, $(p_1,p_2,p_3)\in D_0=\{(p_1,p_2,p_3): 1<p_1, p_2<2, \frac{1}{p_1}+\frac{1}{p_2}<\frac{3}{2}, p_3\in(1,\nf)\}$ with $p_3>p$, and measurable sets $F_1, F_2, F_3, F$ with $|F|=1$. Let $\tM$ denote the maximal operator. Define the exceptional set $$\Om:=\left(\bigcup_{j=1}^2\left\{x:\tM(\hichi_{F_j})(x)>C|F_j|\right\}\right) \bigcup\left\{x:\tM(\hichi_{F_3})(x)>C|F_3|^{\frac{1}{p}}\right\}.$$
Then $|\Om|\le\frac{1}{4}$ when $C$ is large enough. Set $F':=F\setminus \Om$ so that $|F'|\ge\frac{1}{2}$. For any dyadic number $\mu\ge 1$, define 

\begin{equation} \label{def of S^d}
S^{\mu}:=\left\{s\in S: 1+\frac{\text{dist}(I_s,\Om^c)}{|I_s|}\simeq \mu \right\}.
\end{equation}
Then it suffices to obtain the estimate

\begin{equation} \label{eq: goal}
|\La_{S^{\mu}}(f_1,f_2,f_3,f_4)|\lesssim {\mu}^{-2}|F_1|^{\frac{1}{p_1}}|F_2|^{\frac{1}{p_2}}|F_3|^{\frac{1}{p_3}} \text{  for any dyadic } \mu\ge 1.
\end{equation}

The main idea to obtain \eqref{eq: goal} is to group the tiles in $S^{\mu}$ appropriately, aiming to establish orthogonality among groups. The following definitions are needed. 

\begin{definition}
Let $j\in\{1,4\}$. Given two 4-tiles $s$ and $s'$, we write $s_j<s'_j$ if $I_s\subseteq I_{s'}$ and $\om_{s_j}\supseteq\om_{s'_j}$. We call $T\subseteq S$ a \textbf{$j$-tree} if there exists a $t\in T$ such that $s_j<t_j$ for all $s\in T$. $t$ is called the \textbf{top} of $T$ and denote $I_T:=I_t$. We call $T\subseteq S$ a \textbf{tree} (with top $t$) if for any $s\in T$ we have $I_s\subseteq I_t$ and $\om_{s_j}\supseteq\om_{t_j}$ for some $j\in\{1,4\}$.
\end{definition}
It is easy to see that any tree is a union of a $1$-tree and a $4$-tree.

\begin{definition} \label{def of s}
For any $P\subseteq S$ and $f\in \cs(\ZR)$, define 

\[
\begin{split}
&\textbf{size}_j (P,f):=\sup_{\substack{T\subseteq P\\T \text{ is a $4$-tree }}}\left(\frac{1}{|I_T|}\sum_{s\in T}\|f_{s_j}\|_2^2\right)^{\frac{1}{2}}, j=1 \text{ or }2;\\
&\textbf{size}_4 (P,f):=\sup_{\substack{T\subseteq P\\T \text{ is  a $1$-tree}}}\left(\frac{1}{|I_T|}\sum_{s\in T}\|f_{s_4}\|_2^2\right)^{\frac{1}{2}}.
\end{split}
\] 
\end{definition}

Sizes can be controlled using the proposition below, whose proof will be given in Section \ref{section: tree and size}.
\begin{proposition} \label{lemma: size}
Fix a dyadic number $\mu\ge 1$. For any $P\subseteq S^{\mu}$, $j\in\{1,2,4\}$ and $f\in \cs(\ZR)$,

\[
\text{size}_j(P,f)\lesssim_M \sup_{s\in P}\left(\frac{1}{|I_s|}\|f\|_{L^1(\mu I_s)}+{\mu}^{-M}\inf_{y\in \mu I_s}Mf(y) \right).
\]
\end{proposition}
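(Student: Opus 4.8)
The plan is to bound $\text{size}_j(P,f)$ by passing to an arbitrary $4$-tree $T\subseteq P$ (for $j=1,2$; the case $j=4$ is symmetric with $1$-trees) and controlling the quantity
$\bigl(\frac{1}{|I_T|}\sum_{s\in T}\|f_{s_j}\|_2^2\bigr)^{1/2}$ uniformly in $T$. The starting observation is that for a $4$-tree the single-tiles $s_1$ (respectively $s_2$) have disjoint, essentially lacunary frequency intervals $\om_{s_1}$ stacked above the common top frequency interval $\om_{t_4}$ (since $\om_{s_1}\supseteq\om_{t_1}$ fails in general but $s_1<t_1$ is \emph{not} required — only $s_4<t_4$ — so the $\om_{s_1}$'s behave like a Littlewood--Paley family over $I_T$). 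Concretely, each $f_{s_j}=\hichi^*_{I_s}\cdot(f*\psi_{k,l_j})$ with $\widehat{\psi_{k,l_j}}$ supported in the roughly disjoint intervals $\om_{s_j}$, so $\sum_{s\in T}\|f_{s_j}\|_2^2$ is morally $\|f\|_{L^2}^2$ restricted near $I_T$ after accounting for the spatial cutoffs.

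The key steps, in order: (1) Use \eqref{time interval}, i.e. the pointwise domination of $f_{s_j}$ by the averaged expression, to write $\|f_{s_j}\|_2^2\lesssim \frac{1}{|I_s|}\bigl(\int |f(y)|w_{I_s}(y)\,dy\bigr)^2$ where $w_{I}$ is the $L^1$-normalized weight $(1+\mathrm{dist}(\cdot,I)/|I|)^{-M}$; but this is lossy, so instead I would keep one factor of $f_{s_j}$ in $L^2$ and exploit almost-orthogonality of the $\psi_{k,l_j}$ across $s\in T$. (2) Split the integral defining $f_{s_j}$ into the contribution from $\mu I_s$ (the ``local'' part) and the tail $(\mu I_s)^c$. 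For the local part, bound $\|f\hichi_{\mu I_s}\|$-type quantities by $\frac{1}{|I_s|}\|f\|_{L^1(\mu I_s)}$ using that $|f|\le 1$ on its support is \emph{not} assumed here (this is a general $f$), so one uses Cauchy--Schwarz against the weight and the disjointness of the $\om_{s_j}$ to collapse $\sum_{s\in T}$ into a single $L^2$ norm over a neighborhood of $I_T$, then dominate that $L^2$ average by the $L^1$ average over $\mu I_T$ times a gain — the standard ``size of a tree is controlled by a local average'' computation as in Lacey--Thiele. (3) For the tail part, the rapid decay $(1+\mathrm{dist}(x,I_s)/|I_s|)^{-N}$ together with $\mathrm{dist}(I_s,\Omega^c)\simeq(\mu-1)|I_s|$ on $S^\mu$ produces the factor $\mu^{-M}$, and the remaining integral is dominated by $\inf_{y\in\mu I_s}Mf(y)$ by definition of the Hardy--Littlewood maximal function applied to $f$ on dyadic dilates of $I_s$.

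I expect the main obstacle to be step (2): correctly executing the almost-orthogonality so that summing $\|f_{s_j}\|_2^2$ over a $4$-tree $T$ really does telescope into (a constant times) $\|f\|_{L^2(\text{nbhd of }I_T)}^2$ rather than something larger. The subtlety is that for $j=1$ or $2$ the relevant single-tiles are the ``short'' ones ($\om_{s_1}=\om_{s_2}$, the $1,2$-tile in Figure \ref{fig: compare}), and for a $4$-tree the tops are aligned via $\om_{s_4}$; one must check that the frequency intervals $\om_{s_1}$ are pairwise disjoint (up to bounded overlap) as $s$ ranges over the $4$-tree, which follows from the grid property \eqref{first property of tiles}--\eqref{all in} of $\{\om_{s_1}\cup\om_{s_4}\}$ and the fact that distinct tiles in a tree with the same $\om_{s_4}$ have distinct scales. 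Granting that, Bessel's inequality for the $\psi_{k,l_1}$ gives $\sum_s\|f*\psi_{k,l_1}\|_{L^2(I_s)}^2\lesssim\|f\|_{L^2(\cup I_s)}^2\lesssim |I_T|\cdot\bigl(\text{avg of }|f|^2\text{ over }\sim I_T\bigr)$, and then $L^2\subseteq L^1$-average-dominates on a fixed interval finishes the local term. The tail term, by contrast, is routine given the weight bounds already recorded in \eqref{time interval} and the definition \eqref{def of S^d} of $S^\mu$.
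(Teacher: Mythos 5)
Your outline gets the geometry right (in a $4$-tree the $s_1$-tiles form a lacunary Littlewood--Paley family over $I_T$, which is exactly the paper's Figure~\ref{fig: local L-P}), and your treatment of the tail term is essentially the paper's: the decay in \eqref{time interval} together with the definition \eqref{def of S^d} of $S^{\mu}$ yields the factor $\mu^{-M}$ times $\inf_{y\in\mu I_s}\tM f(y)$. The genuine gap is in your local term. Bessel/almost-orthogonality for the $\psi_{k,l_1}$ can only give
\[
\Big(\tfrac{1}{|I_T|}\sum_{s\in T}\|(f\hichi_{\mu I_T})_{s_1}\|_2^2\Big)^{\frac{1}{2}}\lesssim \Big(\tfrac{1}{|I_T|}\int_{\mu I_T}|f|^2\Big)^{\frac{1}{2}},
\]
i.e.\ the local $L^2$ average of $f$, and your closing step ``$L^2\subseteq L^1$-average-dominates on a fixed interval'' runs in the wrong direction: on a fixed interval Cauchy--Schwarz gives $\frac{1}{|I|}\|f\|_{L^1(I)}\le\big(\frac{1}{|I|}\|f\|_{L^2(I)}^2\big)^{1/2}$, not the reverse, and for a general $f$ (e.g.\ a tall spike) the $L^2$ average can be arbitrarily larger than the $L^1$ average. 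So your argument proves a weaker statement than the proposition, which asserts control by $\frac{1}{|I_s|}\|f\|_{L^1(\mu I_s)}$. This is not a cosmetic loss: in the application one takes $|f_j|\le\hichi_{F_j}$, where the $L^1$-average bound gives $\text{size}_j\lesssim\mu|F_j|$ while the $L^2$-average bound only gives $|F_j|^{1/2}$-type control, and the interpolation $\min\{\mu|F_j|,\sigma|F_j|^{1/2}\}$ in Section~\ref{section: pf} is exactly what produces the range $\frac{1}{p_1}+\frac{1}{p_2}<\frac{3}{2}$.

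The missing idea is the John--Nirenberg-type reduction, Lemma~\ref{J-N}: one first shows that the $\ell^2$-aggregated size is comparable to $\sup_T\frac{1}{|I_T|}\big\|\big(\sum_{s\in T}\frac{\|f_{s_1}\|_2^2}{|I_s|}\hichi_{I_s}\big)^{1/2}\big\|_{1,\infty}$, by a stopping-time argument on the level set of the square function inside $I_T$. Once the size is expressed through a weak-$L^1$ norm, the local term is handled by the weak type $(1,1)$ bound for the discrete square function (proved from its $L^2$ bound via Calder\'on--Zygmund decomposition), which legitimately yields $C\|f\|_{L^1(\mu I_T)}/|I_T|$. Without this upgrade from $L^2$-type to weak-$L^1$-type control, the Bessel route cannot reach the stated $L^1$ bound, so as written the proposal does not prove Proposition~\ref{lemma: size}.
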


If tiles form a tree, then we can control the corresponding 4-form by sizes, as suggested by the following proposition.
\begin{proposition} \label{lemma: single tree}
Let $T\subseteq S^{\mu}$ be a tree. Then

\[
|\La_T(f_1,f_2,f_3,f_4)|\lesssim \mu|I_T|\prod_{j\in\{1,2,4\}}\text{size}_j(T, f_j)|F_3|^{\frac{1}{p_3}}.
\]
\end{proposition}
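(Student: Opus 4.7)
My plan is to split $T$ as $T = T^{(1)} \cup T^{(4)}$, where $T^{(j)}$ is a $j$-tree, and bound $\La_{T^{(j)}}$ separately; by symmetry it suffices to present the 4-tree case $T^{(4)}$, the 1-tree case being analogous after interchanging the roles of tiles $1, 2$ and tile $4$ (the $3$-tile is treated identically in both situations because $l_3 = 18$ is independent of the tree type).

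The core step is a pointwise bound on the Littlewood--Paley factor. Since $l_3 = 18$ is fixed, the wave packet $f_{s_3}(x) = \hichi^*_{I_s}(x)\,(f_3 * \psi_{k,18})(x)$ is simply the LP projection of $f_3$ at scale $2^k = |I_s|^{-1}$, localized to $I_s$. I will show
\[
|f_{s_3}(x)| \lesssim \mu \, |F_3|^{1/p_3} \left(1 + \frac{\text{dist}(x, I_s)}{|I_s|}\right)^{-N}
\]
by combining two estimates: the trivial $L^\infty$ bound $|f_{s_3}(x)| \le \|f_3\|_\infty \le 1$ from $|f_3| \le \hichi_{F_3}$, and a maximal-function bound $|f_{s_3}(x)| \lesssim \mu\, Mf_3(y_0)$ valid for any $y_0 \in \mu I_s$ (following from the scale-$|I_s|$ decay of $\psi_{k,18}$ by splitting the convolution into a local piece on $\mu I_s$ and a geometrically decaying tail). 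Because $s \in S^\mu$ forces $\mu I_s \cap \Omega^c \ne \emptyset$, I pick $y_0$ outside $\Omega$ so that $Mf_3(y_0) \lesssim |F_3|^{1/p}$. A short case analysis depending on whether $|F_3| \le 1$ or $|F_3| > 1$, together with the hypothesis $p_3 \ge p$, then upgrades $|F_3|^{1/p}$ to $|F_3|^{1/p_3}$: indeed, if $\mu|F_3|^{1/p}\le 1$ then $|F_3|\le 1$ and $|F_3|^{1/p}\le |F_3|^{1/p_3}$, while otherwise $1\le \mu^{1-p/p_3}\le\mu|F_3|^{1/p_3}$.

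Having controlled $|f_{s_3}|$ pointwise by $\mu |F_3|^{1/p_3}$ times a smoothed indicator of $I_s$, it remains to bound
\[
\sum_{s \in T^{(4)}} \int |f_{s_1}(x) f_{s_2}(x) f_{s_4}(x)| \, \hichi^*_{I_s}(x) \, dx \lesssim |I_T| \prod_{j \in \{1,2,4\}} \text{size}_j(T, f_j),
\]
which is the classical Lacey--Thiele BHT tree estimate applied to our three remaining tiles. On a 4-tree, the frequency intervals $\omega_{s_4}$ all contain a common point $\xi_0 \in \omega_{t_4}$, so after demodulation by $\xi_0$ the wave packets $f_{s_4}$ become essentially low-frequency pieces on the nested $I_s$'s and can be summed by a Carleson/BMO argument; meanwhile the $\omega_{s_1}$'s are pairwise disjoint across distinct scales, so a Bessel-type inequality combined with $\sum_{s \in T^{(4)}} \|f_{s_j}\|_2^2 \le |I_T|\, \text{size}_j(T,f_j)^2$ for $j \in \{1,2\}$ (available precisely because $\text{size}_{1,2}$ are defined over 4-trees in Definition \ref{def of s}) yields the required product of sizes. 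Collecting everything gives the proposition.

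The main obstacle I anticipate is keeping the $\mu$-dependence linear: a naive maximal-function bound for $f_{s_3}$ produces higher powers of $\mu$ coming from the discrepancy between averages at points of $I_s$ and at the anchor $y_0 \in \mu I_s$, and a careful annular decomposition of the convolution $f_3 * \psi_{k,18}$ is needed to keep the factor at $\mu^1$. Upgrading $|F_3|^{1/p}$ to $|F_3|^{1/p_3}$ via the trivial $L^\infty$ bound and the case analysis above is the other delicate point, although it is conceptually light.
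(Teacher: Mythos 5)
Your handling of the $4$-tree half is essentially fine, and your estimate for the $3$-tile factor (the pointwise bound $|f_{s_3}|\lesssim \mu\min\{1,|F_3|^{1/p}\}$ with decay, upgraded to $\mu|F_3|^{1/p_3}$ via $p_3\ge p$) mirrors the paper's bound $\bigl(\frac{1}{|I_T|}\sum_{s\in T}\|(f_3)_{s_3}\|_2^2\bigr)^{1/2}\lesssim\mu\min\{1,|F_3|^{1/p}\}\le\mu|F_3|^{1/p_3}$; in a $4$-tree the $1$- and $2$-tiles are lacunary, so pairing their two square functions and supping out $f_{s_4}$ by Bernstein does give $|I_T|\,\text{size}_1\,\text{size}_2\,\text{size}_4$ (the paper instead pairs the $f_1$- and $f_3$-square functions, but both routes work there, and no Carleson/BMO machinery is needed for $f_{s_4}$ — the per-tile Bernstein bound $\|f_{s_4}\|_\infty\lesssim|I_s|^{-1/2}\|f_{s_4}\|_2\le\text{size}_4$ suffices since a singleton is a $1$-tree).

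The genuine gap is the claimed symmetry with the $1$-tree case. Once you freeze $f_{s_3}$ pointwise, the three remaining families in a $1$-tree are: the $1$- and $2$-tiles, which are overlapping (they coincide and all contain $\omega_{t_1}$, so they can only be supped out), and the $4$-tiles, which form the \emph{only} lacunary family. "Interchanging roles" then leaves you needing
\[
\sum_{s\in T}\int |f_{s_4}(x)|\Bigl(1+\tfrac{\text{dist}(x,I_s)}{|I_s|}\Bigr)^{-N}dx\;\lesssim\;|I_T|\,\text{size}_4(T,f_4),
\]
and this is false: the left side is an $\ell^1$-type sum $\lesssim\sum_{s\in T}\|f_{s_4}\|_2|I_s|^{1/2}$, and if the tree has $K$ scales each tiling $I_T$ with $\|f_{s_4}\|_2^2\simeq|I_s|\sigma^2/K$, then $\text{size}_4(T,f_4)\lesssim\sigma$ while the sum is $\simeq\sqrt{K}\,|I_T|\sigma$. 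There is only one square function left and nothing to pair it with — this is exactly the asymmetry created by $\omega_{s_1}=\omega_{s_2}$, and it is why the paper insists that the Littlewood--Paley $3$-tile "will be of great help": in the $1$-tree case the paper bounds the form by $\int\sup_s|f_{s_1}|\,\sup_s|f_{s_2}|\,\bigl(\sum_s|f_{s_3}|^2\bigr)^{1/2}\bigl(\sum_s|f_{s_4}|^2\bigr)^{1/2}$ and pairs the $f_3$- and $f_4$-square functions by Cauchy--Schwarz, using the orthogonality of the $3$-pieces across scales to bound $\bigl(\sum_{s\in T}\|f_{s_3}\|_2^2\bigr)^{1/2}\lesssim\mu\min\{1,|F_3|^{1/p}\}|I_T|^{1/2}$ — an estimate your per-tile pointwise bound cannot reproduce, since summing $\|f_{s_3}\|_2^2\lesssim\mu^2|F_3|^{2/p_3}|I_s|$ over the tree again picks up the number of scales. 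To repair the argument, keep $(\sum_s|f_{s_3}|^2)^{1/2}$ as an honest square function in the $1$-tree case, bound its $L^2(I_T)$-norm via Littlewood--Paley theory together with the exceptional-set bound on $Mf_3$, and sup out only $f_1,f_2$ by Bernstein, i.e.\ follow the paper's route there.
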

\begin{proof}
First assume $T$ is a $1$-tree. By Cauchy-Schwartz inequality, we have

\begin{equation*}
\begin{split}
&|\La_T(f_1,f_2,f_3,f_4)|\le \int\sup_{s\in T}|(f_1)_{s_1}|\sup_{s\in T}|(f_2)_{s_2}|\left(\sum_{s\in T}|(f_3)_{s_3}|^2\right)^{\frac{1}{2}}\left(\sum_{s\in T}|(f_4)_{s_4}|^2\right)^{\frac{1}{2}}\\
&\le|I_T|\sup_{s\in T}\|(f_1)_{s_1}\|_{\infty}\sup_{s\in T}\|(f_2)_{s_2}\|_{\infty}\left(\frac{1}{|I_T|}\sum_{s\in T}\|(f_3)_{s_3}\|_2^2\right)^{\frac{1}{2}}\left(\frac{1}{|I_T|}\sum_{s\in T}\|(f_4)_{s_4}\|_2^2\right)^{\frac{1}{2}}.
\end{split}
\end{equation*}

Using the structure of the $1$-tree and the definition of $S^{\mu}$, 

\begin{equation}
\left(\frac{1}{|I_T|}\sum_{s\in T}\|(f_3)_{s_3}\|_2^2\right)^{\frac{1}{2}}\lesssim \mu\min\{1, |F_3|^{\frac{1}{p}} \}\le \mu|F_3|^{\frac{1}{p_3}}
\end{equation}

Combine the above two estimates and can bound $|\La_T(f_1,f_2,f_3,f_4)|$ by

\[
\mu|I_T|\sup_{s\in T}\|(f_1)_{s_1}\|_{\infty} \sup_{s\in T}\|(f_2)_{s_2}\|_{\infty} \text{ size}_4(T,f_4) |F_3|^{\frac{1}{p_3}}.
\]

It remains to prove that for $i=1$ or $i=2$, $\|(f_i)_{s_i}\|_{\infty}\lesssim \text{size}_i(T,f_i)$ for any $s\in T$. We will only consider $i=1$ case as the other case can be handled similarly. We just need to prove the estimate 

\begin{equation} \label{1-size}
\|(f_1)_{s_1}\|_{\infty}\lesssim\|(f_1)_{s_1}\|_2|I_s|^{-\frac{1}{2}}
\end{equation}
since $\{s\}$ is a 4-tree.
To prove \eqref{1-size}, recall for $s=(k,n,l)$, $(f_1)_{s_1}(x)=\hichi_{I_{k,n}}^*(x)f_1*\psi_{k,l}(x)$, where $\psi_{k,l}(x)=2^k\psi (2^kx)e^{-2\pi i \frac{l}{2}x}$. Let $b$ be a real number such that $|\frac{l}{2}-b|=2^k$ and define $\widetilde{(f_1)_{s_1}}(x):=e^{2\pi ibx}(f_1)_{s_1}(x)$. Then $\widetilde{(f_1)_{s_1}}'(x)=\gamma (f_1)_{s_1}(x)$ for some $\gamma\lesssim 2^k$. Hence 

$$
\|(f_1)_{s_1}\|_{\infty}=\|\widetilde{(f_1)_{s_1}}\|_{\infty}\lesssim\sqrt{\|\widetilde{(f_1)_{s_1}}\|_2\|\widetilde{(f_1)_{s_1}}'\|_2}\lesssim 2^{\frac{k}{2}}\|(f_1)_{s_1}\|_2\lesssim \|(f_1)_{s_1}\|_2|I_s|^{-\frac{1}{2}}
$$ as desired.

Now assume $T$ is a $4$-tree. By similar arguments, we have

\[
\begin{split}
&|\La_T(f_1,f_2,f_3,f_4)|\le \int\left(\sum_{s\in T}|(f_1)_{s_1}|^2\right)^{\frac{1}{2}}\sup_{s\in T}|(f_2)_{s_2}|\left(\sum_{s\in T}|(f_3)_{s_3}|^2\right)^{\frac{1}{2}}\sup_{s\in T}|(f_4)_{s_4}|\\
&\le|I_T|\left(\frac{1}{|I_T|}\sum_{s\in T}\|(f_1)_{s_1}\|_2^2\right)^{\frac{1}{2}}\sup_{s\in T}\|(f_2)_{s_2}\|_{\infty}\left(\frac{1}{|I_T|}\sum_{s\in T}\|(f_3)_{s_3}\|_2^2\right)^{\frac{1}{2}}\sup_{s\in T}\|(f_4)_{s_4}\|_{\infty}\\
&\lesssim \mu|I_T|\text{ size}_1(T,f_1) \sup_{s\in T}\|(f_2)_{s_2}\|_{\infty} \sup_{s\in T}\|(f_4)_{s_4}\|_{\infty} |F_3|^{\frac{1}{p_3}}\\
&\lesssim \mu|I_T|\prod_{j\in\{1,2,4\}}\text{size }_j(T, f_j)|F_3|^{\frac{1}{p_3}}.
\end{split}
\]
This finishes the proof of Proposition \ref{lemma: single tree}.
\end{proof}

The following proposition provides the algorithm to select trees and group tiles. 

\begin{proposition} \label{lemma: org}
Let $f\in L^2$. Suppose for some $j\in\{1,2,4\}$ and $P\subseteq S$, we have

\[
\text{size}_j(P,f)\le \sigma\|f\|_2 \text{ for some dyadic number } \sigma=2^n, n\in \ZZ.
\]
Then we can decompose $P=P'\cup P''$ such that 

\begin{equation} \label{eq: half}
\text{size}_j(P',f)\le\ \frac{\sigma}{2}\|f\|_2 
\end{equation}
and $P''$ is a union of trees $T$ in some collection $\ff$ with $\sum_{T\in\ff}|I_T|\lesssim \frac{1}{\sigma^2}$.
\end{proposition}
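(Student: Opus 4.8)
This is the standard Carleson-measure/tree-selection (Lacey–Thiele) stopping-time argument, adapted to the $4$-tile setting. The plan is to iteratively extract from $P$ maximal trees whose ``density'' with respect to the given size is as large as possible, place them into the collection $\ff$, and show that the total length $\sum_{T\in\ff}|I_T|$ is controlled by $\sigma^{-2}$ via an $L^2$ orthogonality argument. Once no $4$-tree (if $j\in\{1,2\}$) or $1$-tree (if $j=4$) in the remaining collection has density exceeding $\tfrac{\sigma}{2}\|f\|_2$, we stop and call the remainder $P'$; by the definition of $\textbf{size}_j$ this forces \eqref{eq: half}.

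\textbf{Key steps.} First I would treat, say, the case $j=1$ (the case $j=2$ is identical, and $j=4$ requires only swapping the roles of $1$-trees and $4$-trees and of $\om_{s_1}$ versus $\om_{s_4}$). Run the following loop: if there exists a $4$-tree $T\subseteq P$ with $\bigl(\tfrac{1}{|I_T|}\sum_{s\in T}\|f_{s_1}\|_2^2\bigr)^{1/2}>\tfrac{\sigma}{2}\|f\|_2$, choose one such $T$ whose top single-tile $t_1$ is maximal in the frequency ordering (more precisely, among all such trees pick one whose top $\om_{t_1}$ is lowest/leftmost, following the convention in \cite{LT97}), then enlarge it to a full maximal tree $\widehat T$ in $P$ with the same top, remove $\widehat T$ from $P$, add its $4$-tree part to $\ff$, and repeat. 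Since $P\subseteq S$ is finite this terminates; let $P'$ be what remains and $P''=\bigcup_{T\in\ff}T$. By maximality of the selection, $\textbf{size}_1(P',f)\le\tfrac{\sigma}{2}\|f\|_2$, giving \eqref{eq: half}.

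\textbf{The length estimate.} The heart of the matter is $\sum_{T\in\ff}|I_T|\lesssim\sigma^{-2}\|f\|_2^2/\|f\|_2^2=\sigma^{-2}$. For each selected tree $T$ we have, by construction, $\sum_{s\in T}\|f_{s_1}\|_2^2>\tfrac{\sigma^2}{4}\|f\|_2^2\,|I_T|$, so it suffices to prove the almost-orthogonality bound
\[
\sum_{T\in\ff}\sum_{s\in T}\|f_{s_1}\|_2^2\lesssim \|f\|_2^2 .
\]
The reason is that the single-tiles $\{s_1:s\in T,\ T\in\ff\}$ are, thanks to the tree-selection procedure and the grid property \eqref{all in} of the frequency intervals $\{\om_{s_1}\cup\om_{s_4}\}$, pairwise ``disjoint enough'': distinct selected trees have tops whose frequency intervals are disjoint or nested in a controlled way, and within the frequency scale the wave packets $\psi_{k,l}$ at a fixed scale have disjoint Fourier support (the $l$-localization), while across scales the maximality of the chosen tops keeps the relevant frequency intervals from overlapping too much. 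Concretely one shows $\sum_{s\in T, T\in\ff}\|f_{s_1}\|_2^2 = \sum \|f*\psi_{k,l}\cdot\hichi^*_{I_{k,n}}\|_2^2\lesssim \|f\|_2^2$ by expanding, using that $\sum_n (\hichi^*_{I_{k,n}})^2\lesssim 1$ for each fixed scale, and then invoking the grid/disjointness structure of the selected frequency intervals to sum in $(k,l)$ — this is exactly the $TT^*$ / Bessel-type argument in \cite{LT97}.

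\textbf{Main obstacle.} The delicate point — and the step I expect to be the main obstacle — is the almost-orthogonality/Bessel estimate across \emph{different scales} $k$: one must verify that the stopping-time selection really does produce frequency intervals $\om_{t_1}$ (tops of selected trees) that are pairwise incomparable or nested with geometric separation, so that the associated wave packets are almost orthogonal in $L^2$ and the overlap function has bounded counting. This is where the maximality clause in the selection and the grid hypothesis \eqref{all in} (which guarantees that whenever one of $\om_{s_1},\om_{s_4}$ sits inside a tree's frequency interval $J$, so does the other) get used in an essential way; handling it carefully — in particular ruling out that a tile lies in two selected trees, and bounding the ``tails'' coming from the Schwartz decay of $\hichi^*_{I_{k,n}}$ rather than sharp cutoffs — is the technical core, and I would model it line-by-line on the corresponding lemma in Lacey–Thiele \cite{LT97}.
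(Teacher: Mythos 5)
Your skeleton is the same as the paper's: greedily select $4$-trees exceeding the threshold $\frac{\sigma}{2}\|f\|_2$, remove at each step the full maximal tree with the selected top, let $P'$ be what remains (so \eqref{eq: half} holds by construction), and reduce the counting bound $\sum_{T\in\ff}|I_T|\lesssim\sigma^{-2}$ to the Bessel-type estimate $\sum_{T\in\ff}\sum_{s\in T}\|f_{s_1}\|_2^2\lesssim\|f\|_2^2$ via the selection threshold. (One bookkeeping slip: if you put only the $4$-tree parts into $\ff$ but remove the full trees $\widehat T$, the removed $1$-tree parts lie in neither $P'$ nor $P''=\bigcup_{T\in\ff}T$; simply place the full trees into $\ff$, as the paper does — the tops, hence $\sum_T|I_T|$, are unchanged.)

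The genuine gap is that the Bessel estimate — which is the entire content of the proposition — is left to ``model it line-by-line on Lacey--Thiele,'' and the two concrete choices you do make there are off. First, the selection order: you order by the top's $\om_{t_1}$ (``lowest/leftmost''), but in this $4$-tile geometry the pivot must be the \emph{other} frequency component, selecting at each step the candidate tree whose top maximizes $c(\om_{t_4})$. The engine of the proof is the paper's Claim \ref{crucial observation}: if $s$ lies in an earlier-selected tree $T_1$ with top $t$, $s'$ lies in a different tree, and $\om_{s_1}\subseteq\om_{s'_1}$, then $I_{s'}\cap I_{T_1}=\emptyset$. Its proof runs: if $I_{s'}\subseteq I_t$, then \eqref{all in} and the fixed relative position of $\om_{\cdot_1}$ and $\om_{\cdot_4}$ force $\om_{s'_1}\supseteq\om_{s_4}\supseteq\om_{t_4}$ and also $\om_{s'_1}\supseteq\om_{t_1}$, so $s'$ would have been swallowed by the maximal $1$-tree with top $t$ at the moment $T_1$ was selected — \emph{provided} $T_1$ was selected first, which is guaranteed because $c(\om_{t_4})$ lies in $\om_{s'_1}$, strictly above $\om_{s'_4}\supseteq\om_{t'_4}$, so $c(\om_{t_4})>c(\om_{t'_4})$. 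If you order by $c(\om_{t_1})$ instead (in either direction), this comparison can go either way — e.g.\ take $t'=s'$ and $\om_{t_1}$ near either end of $\om_{s'_1}$ — so the disjointness claim is not recovered and the interfering pairs are not controlled. Second, the mechanism you invoke (disjoint Fourier supports of the $\psi_{k,l}$ at a fixed scale plus $\sum_n(\hichi^*_{I_{k,n}})^2\lesssim1$) is not what makes the sum converge: tiles belonging to different selected trees can occupy the \emph{same} frequency interval with overlapping, non-compactly-supported spatial weights, so no frequency-side disjointness is available across trees. What is actually needed is the Schur-test bound \eqref{eq: schur} on $\|A_sA_{s'}^*\|$ with spatial decay in $\mathrm{dist}(I_s,I_{s'})/|I_s|$, combined with the claim above to place all interfering $s'$ from other trees outside $I_T$ (and pairwise disjointly), plus the threshold \eqref{property of T in ff0} to convert $\|A_sf\|_2\le|I_s|^{1/2}\sigma\|f\|_2$ into tree-local quantities. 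Without formulating and proving the disjointness claim — which is exactly where the correct ordering and the removal of both the maximal $1$-tree and $4$-tree with the selected top are used — the core estimate \eqref{mid goal in energy estimate} remains unproved.
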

The proof of this organization proposition will be postponed to Section \ref{section: org}. 

Now we are ready to prove our goal \eqref{eq: goal}. By Proposition \ref{lemma: size} and the definition of $S^{\mu}$, we have

\begin{equation}
\text{size}_j (S^{\mu}, f_j)\lesssim
\begin{cases}
\mu|F_j| \text{ when } j=1,2; \\
{\mu}^{-M} \text{ for any large } M>0 \text{ when } j=4.\\
\end{cases}
\end{equation}

Iterate the organization algorithm Proposition \ref{lemma: org} for all $j=1,2,4$ simultaneously, and we can decompose $S^{\mu}$ as

$$
S^{\mu}=\bigcup_{\substack{\sigma \text{ is a }\\\text{dyadic number}}} S_{\sigma},
$$
where

\begin{equation}
\text{size}_j (S_{\sigma}, f_j)\lesssim
\begin{cases}
\min\{\mu|F_j|,\sigma |F_j|^{\frac{1}{2}} \}\text{ when } j=1,2; \\
\min\{\mu^{-M},\sigma \}\text{ for any large } M>0 \text{ when } j=4,\\
\end{cases}
\end{equation}
and $S_{\sigma}=\cup_{T\in \ff_{\sigma}} T$ is a union of tees with $\sum_{T\in \ff_{\sigma}}|I_T|\lesssim \frac{1}{\sigma^2}$.

Using this decomposition and the estimate on a single tree (Proposition \ref{lemma: single tree}), we see that 

\[
\begin{split}
|\La_{S^{\mu}}(f_1,&f_2,f_3,f_4)|\lesssim\sum_{\sigma \text{ is dyadic}}\sum_{T\in\ff_{\sigma}} |\La_T(f_1,f_2,f_3,f_4)|\\
&\lesssim \mu\sum_{\sigma}\sum_{T\in\ff_{\sigma}}|I_T|\prod_{j\in\{1,2,4\}}\text{size }_j(T, f_j)|F_3|^{\frac{1}{p_3}}\\
&\lesssim {\mu}^{3} |F_3|^{\frac{1}{p_3}}\sum_{\sigma}\frac{1}{\sigma^2} \min\{|F_1|,\sigma |F_1|^{\frac{1}{2}} \} \min\{|F_2|,\sigma |F_2|^{\frac{1}{2}} \} \min\{{\mu}^{-M},\sigma \}. 
\end{split}
\]

Apply the elementary inequality $\min\{X,Y\}\le X^{\theta}Y^{1-\theta}$, and we can bound $|\La_{S^{\mu}}(f_1,f_2,f_3,f_4)|$ by

\[
\begin{split}
&{\mu}^{3} |F_3|^{\frac{1}{p_3}}\sum_{\sigma}\frac{1}{\sigma^2} \sigma^{2\left(1-\frac{1}{p_1}\right)+2\left(1-\frac{1}{p_2}\right)}|F_1|^{\frac{1}{p_1}}|F_2|^{\frac{1}{p_2}} \min\{{\mu}^{-M},\sigma \}\lesssim {\mu}^{-2}|F_1|^{\frac{1}{p_1}}|F_2|^{\frac{1}{p_2}}|F_3|^{\frac{1}{p_3}},
\end{split}
\]
where we used the fact $\frac{1}{p_1}+\frac{1}{p_2}<\frac{3}{2}$ in the last inequality. This proves \eqref{eq: goal}.
 
\section{Size Estimates} \label{section: tree and size}
\setcounter{equation}0

In this section, we prove Proposition \ref{lemma: size}. The proofs of some variants of this proposition already appear in \cite{DL} and \cite{MS}. For the convenience of the reader, we include the details here. First we need the following lemma which is another form of the John-Nirenberg inequality.
\begin{lemma} \label{J-N}
For any $P\subseteq S$ and $f\in \cs(\ZR)$,

\[
\begin{split}
&\text{size}_j(P,f)\lesssim\sup_{\substack{T\subseteq P\\T \text{is a $4$-tree}}}\frac{1}{|I_T|}\left\|\left(\sum_{s\in T}\frac{\|f_{s_j}\|_2^2}{|I_s|}\hichi_{I_s}\right)^{\frac{1}{2}}\right\|_{1,\infty},~ j\in\{1,2\},\\
&\text{size}_4(P,f)\lesssim\sup_{\substack{T\subseteq P\\T \text{is a $1$-tree}}}\frac{1}{|I_T|}\left\|\left(\sum_{s\in T}\frac{\|f_{s_j}\|_2^2}{|I_s|}\hichi_{I_s}\right)^{\frac{1}{2}}\right\|_{1,\infty}.
\end{split}
\]
\end{lemma}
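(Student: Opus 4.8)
My plan is to reduce to a single tree, reformulate the size via the natural square function, and then close the estimate with a Calder\'on--Zygmund stopping time together with a self-improvement (absorption) step. I describe it for $j\in\{1,2\}$; the case $j=4$ will be identical after interchanging the roles of $1$-trees and $4$-trees. Denote by $\sigma$ the right-hand side of the asserted inequality. Since $\text{size}_j(P,f)$ is itself a supremum over $4$-trees, it suffices to fix one $4$-tree $T\subseteq P$ and prove $\frac{1}{|I_T|}\sum_{s\in T}\|f_{s_j}\|_2^2\lesssim\sigma^2$. Put $g_T:=\sum_{s\in T}\frac{\|f_{s_j}\|_2^2}{|I_s|}\hichi_{I_s}$. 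Since every $I_s\subseteq I_T$, we have $\mathrm{supp}\,g_T\subseteq I_T$ and the exact identity $E:=\sum_{s\in T}\|f_{s_j}\|_2^2=\int_{I_T}g_T$, with $E<\infty$ because $S$ is finite. So the goal becomes $E\lesssim\sigma^2|I_T|$.

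The heart of the matter is the distributional estimate
\begin{equation*}
\big|\{x: g_T(x)>\mu\}\big|\lesssim \sigma\,E\,\mu^{-3/2}\qquad(\mu>0),
\end{equation*}
for which I would use that the intervals $I_s=I_{k,n}$ are genuinely dyadic. Fix $\mu$ and let $\{J\}$ be the maximal dyadic subintervals of $I_T$ on which the average of $g_T$ exceeds $\mu/2$; they are pairwise disjoint, their union contains $\{g_T>\mu\}$ up to a null set (dyadic differentiation), and $\sum_J|J|\le 2E/\mu$ by the weak $(1,1)$ bound for the dyadic maximal function. On a fixed such $J$, the tiles $s\in T$ with $I_s\supsetneq J$ contribute to $g_T$ a constant $C_J$ on $J$, and by maximality of $J$ this constant is at most the average of $g_T$ over the dyadic parent $\widehat J$, so $C_J\le\mu/2$. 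Hence $g_{T|J}:=\sum_{s\in T,\,I_s\subseteq J}\frac{\|f_{s_j}\|_2^2}{|I_s|}\hichi_{I_s}$ satisfies $g_{T|J}\ge g_T-\mu/2$ on $J$, and therefore $\{g_T>\mu\}\cap J\subseteq\{g_{T|J}>\mu/2\}$.

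Now I would invoke the hypothesis on the sub-collection $T|J:=\{s\in T: I_s\subseteq J\}$. It need not be a single tree, but, splitting off its $\subseteq$-maximal tiles, one writes $T|J$ as a finite disjoint union of $4$-trees $T^{(i)}$ whose time tops $I_{s^{(i)}}$ are disjoint dyadic subintervals of $J$; that each $\{s\in T: I_s\subseteq I_{s^{(i)}}\}$ is again a $4$-tree follows from the tile axioms \eqref{first property of tiles}--\eqref{all in} (all $\om_{s_4}$ contain $\om_{t_4}$, and among nested frequency intervals of a grid the larger one contains the smaller). Since the functions $g_{T^{(i)}}$ have pairwise disjoint supports, $\big\|g_{T|J}^{1/2}\big\|_{1,\infty}\le\sum_i\big\|g_{T^{(i)}}^{1/2}\big\|_{1,\infty}\le\sigma\sum_i|I_{s^{(i)}}|\le\sigma|J|$, whence $|\{g_{T|J}>\mu/2\}|\le\sqrt 2\,\sigma|J|\mu^{-1/2}$. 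Summing over $J$ and using $\sum_J|J|\le 2E/\mu$ gives the claimed $|\{g_T>\mu\}|\lesssim\sigma E\mu^{-3/2}$.

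Finally I would combine this with the trivial bound $|\{g_T>\mu\}|\le|I_T|$: for any $\mu_0>0$,
\begin{equation*}
E=\int_0^\infty\big|\{g_T>\mu\}\big|\,d\mu\le\mu_0|I_T|+C\sigma E\int_{\mu_0}^\infty\mu^{-3/2}\,d\mu=\mu_0|I_T|+\tfrac{2C\sigma}{\sqrt{\mu_0}}\,E.
\end{equation*}
Choosing $\mu_0=(4C\sigma)^2$ makes the last coefficient at most $\tfrac12$; since $E<\infty$ we absorb this term into the left side and obtain $E\le 2\mu_0|I_T|\lesssim\sigma^2|I_T|$, which is exactly what we want, and the case $j=4$ is the same with $1$-trees in place of $4$-trees. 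The step I expect to be most delicate is the reorganization of $T|J$ into $4$-trees with disjointly supported square-function pieces, since this is where the geometry of the tiles is genuinely used; once it is in place, the upgrade of the decay from the worthless $\mu^{-1/2}$ to $\mu^{-3/2}$ is what lets the absorption argument close without circularity.
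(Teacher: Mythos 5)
Your argument is correct, and it reaches the lemma by a route whose bookkeeping genuinely differs from the paper's, although both are John--Nirenberg arguments built on dyadic stopping intervals plus an absorption step. The paper fixes the tree attaining $\text{size}_j(P,f)=:A$, stops once at a height comparable to the weak norm $B$ of that single tree, and splits $\sum_{s}\|f_{s_j}\|_2^2=\int\sum_{s}\|f_{s_j}\|_2^2|I_s|^{-1}\hichi_{I_s}$ into the part outside the exceptional set, the part from tiles strictly above the maximal stopping intervals, and the part from tiles inside them; the last part is bounded by $A^2|E|$ using that those sub-collections are again trees contained in $P$, and the factor $|E|\le |I_T|/C$ lets $A$ itself be absorbed. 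You instead work entirely inside one fixed $4$-tree: a Calder\'on--Zygmund stop at every height $\mu$, combined with the hypothesis applied to the sub-$4$-trees rooted at the maximal tiles inside each stopping interval, upgrades the trivial $\mu^{-1/2}$ tail coming from the weak-$L^1$ hypothesis to a $\mu^{-3/2}$ tail, and integrating the distribution function lets you absorb the finite quantity $E=\sum_{s\in T}\|f_{s_j}\|_2^2$ rather than the size. This gives a per-tree statement (only sub-trees of $T$ enter, not the global size over $P$), which is marginally stronger, at the cost of running the level-set analysis at all heights. Both proofs lean on the same geometric fact---that $\{s\in T: I_s\subseteq J\}$ decomposes into $4$-trees with tops among its maximal tiles, via the grid structure and \eqref{all in}---which the paper also invokes without detailed proof, and your sketch of it is adequate. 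Two small points worth recording: when a selected interval $J$ equals $I_T$ it has no dyadic parent inside $I_T$, but then no tile of $T$ satisfies $I_s\supsetneq J$, so $C_J=0$ and the estimate is trivial there; and the degenerate case $\sigma=0$ forces $g_T=0$, so the choice $\mu_0\simeq\sigma^2$ causes no harm.
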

\begin{proof}
Fix $j\in\{1,2,4\}$, $P\subseteq S$ and $f\in \cs(\ZR)$. Let $T\subseteq P$ be an $i$-tree for some $i\in\{1,4\}$ with $i\neq j$ such that 

\[
\text{size}_j (P,f)=\left(\frac{1}{|I_T|}\sum_{s\in T}\|f_{s_j}\|_2^2\right)^{\frac{1}{2}}
\]
For simplicity write $a_s:=\|f_{s_j}\|_2$ for $s\in T$ and we aim to show 

\begin{equation} \label{goal in J-N}
\left(\frac{1}{|I_T|}\sum_{s\in T}{a_s}^2\right)^{\frac{1}{2}}\lesssim \frac{1}{|I_T|}\left\|\left(\sum_{s\in T}\frac{{a_s}^2}{|I_s|}\hichi_{I_s}\right)^{\frac{1}{2}}\right\|_{1,\infty}.
\end{equation}
Denote the left-hand side (LHS) and the right-hand side (RHS) of \eqref{goal in J-N} by $A$ and $B$, respectively. Let $C$ be a large constant and define the set 

\begin{equation} \label{def of E}
E:=\left\{x:\left(\sum_{s\in T}\frac{{a_s}^2}{|I_s|}\hichi_{I_s}(x)\right)^{\frac{1}{2}}>CB\right\}\subseteq I_T.
\end{equation}
 By the definition of weak 1 norm,
 
\begin{equation} \label{measure of E}
|E|\leq\frac{B|I_T|}{CB}=\frac{|I_T|}{C}
\end{equation}
Write $E$ as a joint union of intervals $E=\bigcup_{I^m\in\cj^M} I^m$, where $\cj^M$ is the set of maximal elements in

\begin{equation} \label{def of J}
\cj:=\left\{I=I_{s_0} \text{ for some } s_0\in T: \left(\sum_{s\in T, I_s\supseteq I}{a_s}^2|I_s|^{-1}\right)^{\frac{1}{2}}>CB\right\}.
\end{equation}
By the definition of $A$,

\begin{equation} \label{A^2I_T}
A^2|I_T|=\sum_{s\in T}{a_s}^2=\int_E \sum_{s\in T}\frac{{a_s}^2}{|I_s|}\hichi_{I_s} +\int_{I_T\backslash E} \sum_{s\in T}\frac{{a_s}^2}{|I_s|}\hichi_{I_s}=:H+K.
\end{equation}
Use the decomposition $E=\bigcup_{I^m\in\cj^M} I^m$ to split $H$ further as

\begin{equation} \label{split H}
H=\sum_{I^m\in\cj^M}\int_{I^m} \sum_{\substack{s\in T, I_s\supsetneqq I^m}}\frac{{a_s}^2}{|I_s|}\hichi_{I_s} +\sum_{I^m\in\cj^M}\int_{I^m} \sum_{\substack{s\in T, I_s\subseteq I^m}}\frac{{a_s}^2}{|I_s|}\hichi_{I_s}=:H_1+H_2.
\end{equation}
Since each $I^m$ is maximal in $\cj$ defined by \eqref{def of J},

\begin{equation} \label{H_1}
H_1\leq\sum_{I^m\in\cj^M}(CB)^2|I^m|=(CB)^2|E|\leq(CB)^2|I_T|.
\end{equation}
For each $I^m\in \cj^M$, $\{s\in T: I_s\subseteq I^m\}$ is still an $i$-tree by the grid structure. So the definition of $\textit{size}_j(P,f)$ and \eqref{measure of E} give

\begin{equation} \label{H_2}
H_2=\sum_{I^m\in\cj^M}|I^m|\left(\frac{1}{|I^m|}\sum_{s\in T, I_s\subseteq I^m}{a_s}^2\right)\leq\sum_{I^m\in\cj^M}|I^m|A^2=A^2|E|\leq A^2\frac{|I_T|}{C}
\end{equation}

\noindent Since the integrand in $K$ is dominated by $CB$ by \eqref{def of E}, we have 

\begin{equation} \label{estimate on K}
K\leq (CB)^2|I_T|.
\end{equation}
Putting \eqref{A^2I_T}-\eqref{estimate on K} together, we obtain

\begin{equation}
A^2|I_T|=H_1+H_2+K\leq (CB)^2|I_T|+A^2\frac{|I_T|}{C}+(CB)^2|I_T|,
\end{equation}
from which we obtain $A\lesssim B$. This proves \eqref{goal in J-N} and thus Lemma \ref{J-N}.

\end{proof}
We now turn to the proof of Proposition \ref{lemma: size}. Without loss of generality, assume $j=1$. By Lemma \ref{J-N}, it suffices to show for any $4$-tree $T$,

\begin{equation} \label{eq: BMO}
\left\|\left(\sum_{s\in T}\frac{\|f_{s_1}\|_2^2}{|I_s|}\hichi_{I_s}\right)^{\frac{1}{2}}\right\|_{1,\infty} \lesssim_M \|f\|_{L^1(\mu I_T)}+{\mu}^{-M}\inf_{y\in \mu I_T}\tM f(y) |I_T|.
\end{equation}

\noindent Write $f=f\hichi_{\mu I_T}+f\hichi_{(\mu I_T)^c}$. LHS of \eqref{eq: BMO} is bounded by

\[
\left\|\left(\sum_{s\in T}\frac{\|(f\hichi_{ \mu I_T})_{s_1}\|_2^2}{|I_s|}\hichi_{I_s}\right)^{\frac{1}{2}}\right\|_{1,\infty}+\left\|\left(\sum_{s\in T}\frac{\|(f\hichi_{(\mu I_T)^c})_{s_1}\|_2^2}{|I_s|}\hichi_{I_s}\right)^{\frac{1}{2}}\right\|_1=:I+II.
\]

\noindent By the conditions \eqref{first property of tiles}-\eqref{all in} of the tiles, in a $4$-tree, $s_1$ tiles are Littlewood-Paley pieces as illustrated in Figure \ref{fig: local L-P}. Thus term $I$ is bounded by $C\|f\|_{L^1(\mu I_T)}$ since the discrete square-function operator is of weak type $(1,1)$ by the $L^2$ estimate and Calder\'on-Zygmund decomposition. 

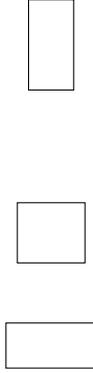
\begin{figure} [h]
\begin{center}
\begin{tikzpicture}

%\draw[->] (-1.0,0) -- (4,0) coordinate (x axis);
%\draw[->] (0,-1.0) -- (0,6) coordinate (y axis);

\node (rect) [rectangle, draw, minimum width=6mm, minimum height=12mm] at (2,4) {};

\node (rect) [rectangle, draw, minimum width=9mm, minimum height=8mm] at (2,1.5) {};

\node (rect) [rectangle, draw, minimum width=12mm, minimum height=6mm] at (2,0) {};

\end{tikzpicture}
\caption{$s_1$ tiles in a $4$-tree}
 \label{fig: local L-P}
\end{center}
\end{figure}

Using the fact $l^2$ norm is no more than $l^1$ norm, we estimate $II$ by 

$$
\sum_{s\in T}\|(f\hichi_{(\mu I_T)^c})_{s_1}\|_2|I_s|^{\frac{1}{2}}.
$$
It remains to show 

\begin{equation} \label{eq: II}
\sum_{s\in T}\|(f\hichi_{(\mu I_T)^c})_{s_1}\|_2|I_s|^{\frac{1}{2}}\lesssim_M {\mu}^{-M}\inf_{y\in \mu I_T}\tM f(y) |I_T|.
\end{equation}
Using \eqref{time interval} we see that control the function $|(f\hichi_{(\mu I_T)^c})_{s_1}(x)|$ is bounded above by

\[
\left(1+\frac{\text{dist}(I_s,(\mu I_T)^c)}{|I_s|}\right)^{-N}\left(1+\frac{\text{dist}(x,I_s)}{|I_s|}\right)^{-N}\inf_{y\in \mu I_T}\tM f(y).
\]
Hence $\sum_{s\in T}\|(f\hichi_{(\mu I_T)^c})_{s_1}\|_2|I_s|^{\frac{1}{2}}$ is dominated by

\[
\inf_{y\in \mu I_T}\tM f(y)\sum_{s\in T}|I_s|\left(1+\frac{\text{dist}(I_s,( \mu I_T)^c)}{|I_s|}\right)^{-N}\lesssim_M {\mu}^{-M}\inf_{y\in \mu I_T} \tM f(y) |I_T|,
\] as desired.
 
\section{Organizing Tiles} \label{section: org}
\setcounter{equation}0
We provide the proof of Proposition \ref{lemma: org} in this section. Without loss of generality, let $j=1$. By the assumptions of Proposition \ref{lemma: org},

\begin{equation}
\sup_{\substack{T\subseteq P\\T \text{ is a $4$-tree }}}\left(\frac{1}{|I_T|}\sum_{s\in T}\|f_{s_1}\|_2^2\right)^{\frac{1}{2}}\le\sigma\|f\|_2.
\end{equation}

Now we begin the tree selection algorithm. Initially set $S_0=P$ and $\ff=\emptyset$. Let

\begin{equation} \label{def of ff0}
\ff_0=\left\{T\subseteq S_0: T \text{ is a } 4\text{-tree such that } \left(\frac{1}{|I_T|}\sum_{s\in T}\|f_{s_1}\|_2^2\right)^{\frac{1}{2}} \ge \frac{\sigma}{2}\|f\|_2 \right\}.
\end{equation}
If $\ff_0\neq \emptyset$, then take $T_1$ to be the $4$-tree in $\ff_0$ with top $t$ such that $c(\om_{t_4})\ge c(\om_{t'_4})$ for any other $T\in \ff_0$ with top $t'$. Let 

\[
\begin{cases}
T_1^{(4)}:=\text{maximal } 4\text{-tree in }S_0 \text{ with top }t,\\ 
T_1^{(1)}:=\text{maximal } 1\text{-tree in }S_0 \text{ with top }t,\\ 
T_1^*:=T_1^{(1)}\cup T_1^{(4)} ~(\text{This is a tree with top } t).
\end{cases}
\]
Update $S_0$ and $\ff$ by setting $S_0:=S_0\setminus T_1^*$ and $\ff:=\ff\cup \{T_1^*\}$.

Repeat this algorithm until there is no $4$-tree in the updated $S_0$ satisfying

$$
\left(\frac{1}{|I_T|}\sum_{s\in T}\|f_{s_1}\|_2^2\right)^{\frac{1}{2}} \ge \frac{\sigma}{2}\|f\|_2.
$$
When the algorithm terminates, we obtain
\[
\begin{split}
&S_0=P\setminus \{T_1^*, T_2^*,\dots, T_l^* \},\\
&\ff=\{T_1^*, T_2^*,\dots, T_l^*\}.
\end{split}
\]

Simply let $P'=S_0$ and $P''=\cup_{T\in\ff}T$. Then Clearly $\text{size}_1(P',f)\le\frac{\sigma}{2}\|f\|_2$. 

Now we turn to the proof of $\sum_{T\in\ff}|I_T|\lesssim \frac{1}{\sigma^2}$. We can assume that each $T\in\ff$ is a $4$-tree. By the definition of $\ff_0$ \eqref{def of ff0}, for any $T\in\ff$,

\begin{equation} \label{property of T in ff0}
\left(\frac{1}{|I_T|}\sum_{s\in T}\|f_{s_1}\|_2^2\right)^{\frac{1}{2}} \ge \frac{\sigma}{2}\|f\|_2.
\end{equation}
Therefore,

$$
\sum_{T\in\ff}|I_T|\lesssim \frac{1}{\sigma^2\|f\|_2^2}\sum_{T\in\ff}\sum_{s\in T} \|f_{s_1}\|_2^2.
$$

It will suffice to prove 

\begin{equation} \label{mid goal in energy estimate}
\sum_{T\in \ff}\sum_{s\in T}\|f_{s_1}\|_2^2\lesssim \|f\|_2^2.
\end{equation}
For each 4-tile $s$, define an operator $A_s$ by $A_sf(x)=f_{s_1}(x)$. By Cauchy-Schwartz inequality, 

$$
\sum_{T\in \ff}\sum_{s\in T}\|f_{s_j}\|_2^2=\left\langle\sum_{T\in\ff}\sum_{s\in T}A_s^*A_sf,f\right\rangle\le\left\|\sum_{T\in\ff}\sum_{s\in T}A_s^*A_sf \right\|_2\|f\|_2.
$$
Hence \eqref{mid goal in energy estimate} follows from the following estimate: 

\begin{equation} \label{eq: e2}
\left\|\sum_{T\in\ff}\sum_{s\in T}A_s^*A_sf \right\|_2\lesssim \left(\sum_{T\in \ff}\sum_{s\in T}\|f_{s_j}\|_2^2\right)^{\frac{1}{2}}.
\end{equation}
To prove \eqref{eq: e2}, write

\[
(\text{LHS of \eqref{eq: e2}})^2=\sum_{T,T'\in\ff}\sum_{\substack{s\in T \\ s'\in T'}}\left\langle A_s^*A_sf,A_{s'}^*A_{s'}f \right\rangle=I+II,
\]
where 

\[
\begin{cases}
I:=\sum_{T\neq T'\in\ff}\sum_{\substack{s\in T \\ s'\in T'}}\left\langle A_s^*A_sf,A_{s'}^*A_{s'}f \right\rangle,\\
II:=\sum_{T\in\ff}\sum_{\substack{s,s'\in T}}\left\langle A_s^*A_sf,A_{s'}^*A_{s'}f \right\rangle.
\end{cases}
\]
Therefore, \eqref{eq: e2} follows from the estimate 

\begin{equation} \label{goal for bfI}
\max\{I,II\}\lesssim \sum_{T\in \ff}\sum_{s\in T}\|f_{s_1}\|_2^2.
\end{equation}
We will only provide the estimate for $I$, as $II$ is easier to control so we omit the proof. Apply Cauchy-Schwartz inequality, 

$$
I \le\sum_{T\neq T'\in\ff}\sum_{\substack{s\in T \\ s'\in T'}}\|A_sf\|_2\|A_sA_{s'}^*\|\|A_{s'}f\|_2.
$$
Hence \eqref{goal for bfI} is a consequence of the inequality below.

\begin{equation} \label{eq: goal before AA}
\sum_{T\neq T'\in\ff}\sum_{\substack{s\in T \\ s'\in T'}}\|A_sf\|_2\|A_sA_{s'}^*\|\|A_{s'}f\|_2 \lesssim \sum_{T\in \ff}\sum_{s\in T}\|f_{s_1}\|_2^2.
\end{equation}

The following estimate for $\|A_sA_{s'}^*\|$ is the key to sum up all the terms in the LHS of \eqref{eq: goal before AA}.
\begin{claim}
$\|A_sA_{s'}^*\|\neq 0$ only when $\om_{s_1}\cap\om_{s'_1}\neq \emptyset$. Moreover,

\begin{equation} \label{eq: schur}
\|A_sA_{s'}^*\|\lesssim_N\frac{|I_{s'}|^{\frac{1}{2}}}{|I_s|^{\frac{1}{2}}}\left(1+\frac{\text{dist}(I_s,I_{s'})}{|I_s|}\right)^{-N} \text{\quad if \quad $\om_{s_1}\subseteq\om_{s'_1}$.}
\end{equation} 
\end{claim}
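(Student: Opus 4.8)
The plan is to realize $A_sA_{s'}^{*}$ as an explicit integral operator and then control its $L^{2}$ operator norm by Schur's test with rapidly decaying weights. Writing $s=(k,n,l)$, recall from \eqref{f} that $A_sf=\hichi_{I_s}^{*}\cdot(f*\psi_{k,l})$, where $\widehat{\psi_{k,l}}$ is supported in $\om_{s_1}$ and $\hichi_{I_s}^{*}=\hichi_{I_s}*\vp_k$ with $\vp\in\cs(\ZR)$. A direct computation (using that $\hichi_{I_s}^{*}$ is real-valued) shows the adjoint is $A_{s'}^{*}g=\widetilde{\psi_{k',l'}}*(\hichi_{I_{s'}}^{*}g)$, where $\widetilde{\psi}(z):=\overline{\psi(-z)}$, and hence
\[
A_sA_{s'}^{*}g(x)=\hichi_{I_s}^{*}(x)\int\hichi_{I_{s'}}^{*}(y)\,g(y)\,\Psi(x-y)\,dy,\qquad \Psi:=\widetilde{\psi_{k',l'}}*\psi_{k,l},
\]
so that $A_sA_{s'}^{*}$ is the integral operator with kernel $\mathcal{K}(x,y)=\hichi_{I_s}^{*}(x)\,\hichi_{I_{s'}}^{*}(y)\,\Psi(x-y)$. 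For the first assertion I would simply observe that $\widehat{\Psi}=\overline{\widehat{\psi_{k',l'}}}\cdot\widehat{\psi_{k,l}}$, so $\text{supp}\,\widehat{\Psi}\subseteq\om_{s_1}\cap\om_{s'_1}$; thus $\Psi\equiv 0$, and therefore $A_sA_{s'}^{*}=0$, whenever $\om_{s_1}\cap\om_{s'_1}=\emptyset$.

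For \eqref{eq: schur}, assume $\om_{s_1}\subseteq\om_{s'_1}$. Since $|\om_{s_1}|=C|I_s|^{-1}$ and $|\om_{s'_1}|=C|I_{s'}|^{-1}$, this forces $|I_{s'}|\le|I_s|$, and since $\{I_s\}_{s\in S}$ is a grid, either $I_{s'}\subseteq I_s$ (so $\text{dist}(I_s,I_{s'})=0$) or $I_s\cap I_{s'}=\emptyset$. The two pointwise ingredients I would establish are: $\hichi_{I}^{*}(x)\lesssim_M(1+\text{dist}(x,I)/|I|)^{-M}$ for $I\in\{I_s,I_{s'}\}$ (immediate from $\vp\in\cs(\ZR)$ together with the fact that $|I_{k,n}|=2^{-k}$ matches the scale of $\vp_k$), and
\[
|\Psi(z)|\lesssim_M\frac{1}{|I_s|}\Big(1+\frac{|z|}{|I_s|}\Big)^{-M},
\]
which holds because $\Psi$ is the convolution of two rapidly decaying bumps at the scales $|I_s|=2^{-k}\ge 2^{-k'}=|I_{s'}|$, and such a convolution is dominated by a bump at the larger scale $|I_s|$.

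With these in hand I would run Schur's test, $\|A_sA_{s'}^{*}\|^{2}\le\big(\sup_x\int|\mathcal{K}(x,y)|\,dy\big)\big(\sup_y\int|\mathcal{K}(x,y)|\,dx\big)$, using repeatedly the one–dimensional estimate $\int(1+\text{dist}(x,I)/R)^{-M}(1+|x-y|/R)^{-M}\,dx\lesssim_M R\,(1+\text{dist}(y,I)/R)^{-M+1}$ with $R=|I_s|$, and then the elementary fact that a product of two bumps at the common scale $|I_s|$, centered on intervals at mutual distance $D$, has supremum $\lesssim(1+D/|I_s|)^{-M+1}$ (split according to which of the two intervals a given point is closer to). In the $y$–integral the cutoff $\hichi_{I_{s'}}^{*}$ is concentrated on a set of measure $\lesssim|I_{s'}|$, and integrating it against a bump at the wider scale $|I_s|$ is exactly what yields the gain $|I_{s'}|/|I_s|$; this produces $\sup_x\int|\mathcal{K}(x,y)|\,dy\lesssim_N\frac{|I_{s'}|}{|I_s|}(1+\text{dist}(I_s,I_{s'})/|I_s|)^{-2N}$ and $\sup_y\int|\mathcal{K}(x,y)|\,dx\lesssim_N(1+\text{dist}(I_s,I_{s'})/|I_s|)^{-2N}$, and taking the geometric mean gives \eqref{eq: schur}.

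The main obstacle is the bookkeeping in this last step: one has to keep track of the asymmetry between the two Schur integrals — it is precisely the narrow cutoff $\hichi_{I_{s'}}^{*}$ tested against a bump at the wider scale $|I_s|$ that yields the ratio $(|I_{s'}|/|I_s|)^{1/2}$ rather than a bare decay factor — and one has to treat correctly both the nested case $I_{s'}\subseteq I_s$, where the distance factor is trivial, and the separated case, where the decay in $\text{dist}(I_s,I_{s'})/|I_s|$ is extracted by splitting space according to proximity to $I_s$ versus $I_{s'}$. Everything else is routine estimation of Schwartz tails.
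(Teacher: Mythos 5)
Your proposal is correct and follows essentially the same route as the paper: you realize $A_sA_{s'}^*$ as an integral operator with kernel $\hichi_{I_s}^*(x)\hichi_{I_{s'}}^*(y)\,\widetilde{\psi_{s'_1}}*\psi_{s_1}(x-y)$, deduce the vanishing condition from the Fourier support of the convolution, bound the kernel by Schwartz tails at the coarser scale $|I_s|$, and conclude by Schur's test, with the asymmetry of the two Schur integrals producing the factor $(|I_{s'}|/|I_s|)^{1/2}$ exactly as in the paper. The only differences are cosmetic bookkeeping (e.g.\ your grid remark is not needed), so no changes are required.
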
 
\begin{proof}
Write $A_sA_{s'}^*f(x)=\int K(x,y)f(y)\,dy$, where $K(x,y)=\hichi_{I_s}^*(x)\hichi_{I_{s'}}^*(y)\widetilde{\psi_{s_j'}}*\psi_{s_j}(x-y)$, $\psi_{s_j}:=\psi_{k,l_j}$ for $s=(k,n,l)$ and $\widetilde{g}(x):=\overline{g(-x)}$ for any function $g$. Note that $\widetilde{\psi_{s_j'}}*\psi_{s_j}(t)=\int \overline{\widehat{\psi_{s'}}}(\xi)\widehat{\psi_s}(\xi)e^{2\pi i\xi t}\, d\xi$ is non-zero only when $\om_{s_j}\cap\om_{s'_j}\neq \emptyset$ by \eqref{f} and \eqref{intersect}. Assume $\om_{s_1}\subseteq\om_{s'_1}$. By the definitions of $\hichi_I^*$ \eqref{chi I} and $\psi_{k,l}$ and using the triangle inequality $(1+|a|)^{-1}+(1+|b|)^{-1}\leq (1+|a+b|)^{-1}$, 

\[
\begin{split}
|K(x,y)|&\lesssim_N\left(1+\frac{\text{dist}(x,I_s)}{|I_s|}\right)^{-2N}\left(1+\frac{\text{dist}(y,I_{s'})}{|I_{s'}|}\right)^{-N}\\
&\qquad\frac{1}{|I_s||I_{s'}|}\int \left(1+\frac{|x-y-z|}{|I_{s'}|}\right)^{-2N}\left(1+\frac{|z|}{|I_s|}\right)^{-N}\, dz\\
&\lesssim_N\left(1+\frac{\text{dist}(I_s,I_{s'})}{|I_s|}\right)^{-N}\frac{1}{|I_s|}\left(1+\frac{\text{dist}(x,I_{s})}{|I_{s}|}\right)^{-N}.
\end{split}
\]
Hence 

\begin{equation} \label{eq: schur1}
\int|K(x,y)|dx\lesssim_N\left(1+\frac{\text{dist}(I_s,I_{s'})}{|I_s|}\right)^{-N}.
\end{equation}
Similarly,

\begin{equation} \label{eq: schur2}
\int|K(x,y)|dy\lesssim_N\left(1+\frac{\text{dist}(I_s,I_{s'})}{|I_s|}\right)^{-N}\frac{|I_{s'}|}{|I_s|}.
\end{equation}
\eqref{eq: schur1} and \eqref{eq: schur2} imply \eqref{eq: schur} by Schur's lemma.
\end{proof}
By the claim and symmetry, in the proof of \eqref{eq: goal before AA} we will assume without loss of generality $\om_{s_1}\subseteq\om_{s'_1}$. We will also assume that  $\om_{s_1}\subsetneqq\om_{s'_1}$, as the case $\om_{s_1}=\om_{s'_1}$ can be handled the same way. Under these assumptions, \eqref{eq: goal before AA} has been reduced to

\begin{equation} \label{reduced goal for bI}
\sum_{T\neq T'\in\ff}\sum_{\substack{s\in T, s'\in T'\\\om_{s_1}\subsetneqq\om_{s'_1}}}\|A_sf\|_2\|A_sA_{s'}^*\|\|A_{s'}f\|_2 \lesssim \sum_{T\in \ff}\sum_{s\in T}\|f_{s_1}\|_2^2.
\end{equation}

Since $\{s\}$ is a $4$-tree and size$_1(f,P)\le\sigma\|f\|_2$,

\begin{equation} \label{Af 1}
\|A_sf\|_2\le |I_s|^{\frac{1}{2}}\sigma\|f\|_2.
\end{equation}
Also notice that by \eqref{property of T in ff0}

\begin{equation} \label{Af 2}
\sigma\|f\|_2\lesssim \left(|I_T|^{-1}\sum_{s_0\in T}\|f_{(s_0)_1}\|_2^2\right)^{\frac{1}{2}}.
\end{equation}
 
Combine \eqref{Af 1} and \eqref{Af 2}, and we see that 

\begin{equation} \label{A_sf 1}
\|A_sf\|_2\lesssim |I_s|^{\frac{1}{2}}|I_T|^{-\frac{1}{2}}\left(\sum_{s_0\in T}\|f_{(s_0)_1}\|_2^2\right)^{\frac{1}{2}}.
\end{equation}
Similarly,  

\begin{equation} \label{A_sf 2}
\|A_{s'}f\|_2\lesssim|I_{s'}|^{\frac{1}{2}}|I_T|^{-\frac{1}{2}}\left(\sum_{s_0\in T}\|f_{(s_0)_1}\|_2^2\right)^{\frac{1}{2}}.
\end{equation}
Using \eqref{A_sf 1} and \eqref{A_sf 2}, LHS of \eqref{reduced goal for bI} is bounded by

\[
\sum_{T\in\bT}\left(\sum_{s_0\in T}\|f_{(s_0)_1}\|_2^2\right)\left(\sum_{\substack{s\in T, T'\neq T\\s'\in T',\om_{s_1}\subsetneqq\om_{s'_1}}}|I_s|^{\frac{1}{2}}|I_{s'}|^{\frac{1}{2}}|I_T|^{-1}\|A_sA_{s'}^*\|\right).
\]
Therefore, \eqref{reduced goal for bI} will be established once we show that for any $T\in\ff$,

$$
\sum_{\substack{s\in T, T'\neq T\\s'\in T',\om_{s_1}\subsetneqq\om_{s'_1}}}|I_s|^{\frac{1}{2}}|I_{s'}|^{\frac{1}{2}}|I_T|^{-1}\|A_sA_{s'}^*\|\lesssim 1.
$$
By \eqref{eq: schur}, this can be reduced to the estimate that for any $T\in \ff$,

\begin{equation} \label{goal before observation}
\sum_{\substack{s\in T,T'\neq T\\s'\in T',\om_{s_1}\subsetneqq\om_{s'_1}}}\left(1+\frac{\text{dist}(I_s,I_{s'})}{|I_s|}\right)^{-N} |I_{s'}|\lesssim |I_T|.
\end{equation}

To prove \eqref{goal before observation}, we need a crucial observation.

\begin{figure} [h]
\begin{center}
\begin{tikzpicture}

\node (rect) [rectangle, draw, minimum width=8mm, minimum height=50mm] at (4,8) {$s'_1$};

\node (rect) [rectangle, draw, minimum width=8mm, minimum height=50mm] at (4,0) {$s'_4$};

\node (rect) [rectangle, draw, minimum width=15mm, minimum height=20mm] at (2,9.5) {$s_1$};

\node (rect) [rectangle, draw, minimum width=15mm, minimum height=20mm] at (2,6.5) {$s_4$};

\node (rect) [rectangle, draw, minimum width=70mm, minimum height=5mm] at (2,7) {\hspace{-5 cm}$t_1$};

\node (rect) [rectangle, draw, minimum width=70mm, minimum height=5mm] at (2,6) {\hspace{-5 cm} $t_4$};

\node (rect) [rectangle, draw, minimum width=55mm, minimum height=10mm] at (2,-1) {$t'_4$};

\end{tikzpicture}
\caption{a crucial geometric observation}
 \label{fig: crucial observation}
\end{center}
\end{figure}

\begin{claim} \label{crucial observation}
If $T_1\neq T_2 \in\ff$, $s\in T_1$, and $s'\in T_2$, then

$$
\om_{s_1}\subseteq \om_{s'_1} \Rightarrow I_{s'}\cap I_{T_1}=\emptyset.
$$
\end{claim}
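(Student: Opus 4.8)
The plan is to argue by contradiction using the tree-selection order, exactly as in the standard energy/orthogonality lemma for BHT-type operators. Suppose $T_1\neq T_2\in\ff$, $s\in T_1$, $s'\in T_2$, with $\om_{s_1}\subseteq\om_{s'_1}$ but $I_{s'}\cap I_{T_1}\neq\emptyset$. Let $t$ and $t'$ be the tops of $T_1$ and $T_2$ respectively, so that $s_1<t_1$ (hence $\om_{s_1}\supseteq\om_{t_1}$ and $I_s\subseteq I_t=I_{T_1}$) and $s'$ sits below $t'$. First I would use the grid structure on $\{I_s\}_{s\in S}$: since $I_s\subseteq I_{T_1}$ and $I_{s'}\cap I_{T_1}\neq\emptyset$, either $I_{s'}\subseteq I_{T_1}$ or $I_{s'}\supseteq I_{T_1}$. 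In the first case $I_{s'}\subseteq I_{T_1}=I_t$; combined with $\om_{t_1}\subseteq\om_{s_1}\subseteq\om_{s'_1}$ this shows $s'$ is below the top $t$ in the $1$-tree sense, i.e.\ $s'\in T_1^{(1)}\subseteq T_1^*$ (the maximal $1$-tree with top $t$ that was removed when $T_1$ was selected). But $s'\in T_2$ and $T_1^*\cap T_2$-stage tiles were already removed from $S_0$ before $T_2$ was formed, so $s'$ cannot also lie in $T_2$ — contradiction. (This is where I invoke that the algorithm removes the \emph{full} tree $T_i^*=T_i^{(1)}\cup T_i^{(4)}$, not just the $4$-tree.)

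The remaining case is $I_{s'}\supsetneq I_{T_1}\supseteq I_s$, so that $I_s\subseteq I_{T_1}\subseteq I_{s'}$ with at least one containment strict; then $I_s\subseteq I_{s'}$. Now I use the frequency side: we also have $\om_{s'_1}\supseteq\om_{s_1}\supseteq\om_{t_1}$, so $\om_{t_1}\subseteq\om_{s'_1}$, and by \eqref{all in} (applied with $J=\om_{s'_1}\cup\om_{s'_2}\cup\om_{s'_4}$, since $\om_{s'_1}=\om_{s'_2}$ this is $\om_{s'_1}\cup\om_{s'_4}$) the inclusion $\om_{t_1}\subsetneq\om_{s'_1}\subseteq J$ forces $\om_{t_4}\subseteq J$ as well; using the grid $\{\om_{s_1}\cup\om_{s_4}\}$ together with the normalization $\mathrm{dist}(\om_{s_1},\om_{s_4})=|\om_{s_1}|$ and $c(\om_{s_1})>c(\om_{s_4})$, one deduces $\om_{t_4}\subseteq\om_{s'_4}$. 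Since also $I_{t}\subseteq I_{T_1}\subseteq I_{s'}$, this says $s'_4<t_4$, i.e.\ $t$ lies below $t'$ in the $4$-tree with top $t'$: more precisely $t\in T_2^{(4)}\subseteq T_2^*$. Then I compare the selection order of $T_1$ and $T_2$: whichever was chosen first removed a full tree containing the relevant tiles of the other, so either $t$ was already removed before $T_1$ was formed (if $T_2$ came first), contradicting $t\in T_1\subseteq S_0$; or, if $T_1$ came first, the maximality clause in the tree-selection step of $T_2$ and the rule $c(\om_{t_4})\ge c(\om_{t''_4})$ would have forced $T_2$ (or the tile $s'$) out of $S_0$ already. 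Chasing the center inequality $c(\om_{t_4})$ vs.\ $c(\om_{t'_4})$ carefully pins down the contradiction.

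Concretely, the cleanest route is: WLOG $T_1$ was selected before $T_2$. At the moment $T_1$ was chosen, its top $t$ satisfied $c(\om_{t_4})\ge c(\om_{t''_4})$ over all admissible $4$-trees $T''$ in the current $S_0$, in particular over the $4$-tree $T_2$ (whose tiles were still present). From $\om_{t_4}\subseteq\om_{s'_4}$, which I derived above, and the grid structure, $c(\om_{s'_4})$ and $c(\om_{t_4})$ are comparable, and in fact $\om_{t_4}\subseteq\om_{s'_4}\subseteq\om_{t'_4}$ would give $c(\om_{t'_4})$ on the "wrong side" of $c(\om_{t_4})$; but then $s'$ and indeed all of $T_2^{(4)}$ with top $t'$ should have been swept into $T_1^{(4)}\subseteq T_1^*$ and removed, so $s'\notin S_0$ at the time $T_2$ is formed — contradiction. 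The main obstacle, and the step I expect to require the most care, is the second case: correctly translating the frequency inclusion $\om_{s_1}\subseteq\om_{s'_1}$ plus property \eqref{all in} into the statement $\om_{t_4}\subseteq\om_{s'_4}$ (this uses the rigid relative geometry of the four single-tiles, namely \eqref{first property of tiles}–\eqref{all in}), and then extracting the precise contradiction with the center-maximal choice of tops in the selection algorithm. Once the geometry is set up, the bookkeeping of "which tree was removed first" is routine. I would draw on Figure \ref{fig: crucial observation} throughout to keep the nested intervals straight.
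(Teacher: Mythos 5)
Your proposal follows the same overall route as the paper (contradiction via the tree-selection order), but as written it has two genuine gaps. First, your Case 2 ($I_{s'}\supsetneq I_{T_1}$) cannot occur: $\om_{s_1}\subseteq\om_{s'_1}$ forces $|\om_{s'_1}|\ge|\om_{s_1}|$, hence $|I_{s'}|\le|I_s|\le|I_{T_1}|$, so the grid property turns $I_{s'}\cap I_{T_1}\neq\emptyset$ directly into $I_{s'}\subseteq I_{T_1}$ (this is the first line of the paper's proof). Moreover, the geometric deduction you attempt there is backwards: if $\om_{t_1}\subsetneq\om_{s'_1}$, then \eqref{all in} places $\om_{t_4}$ inside $\om_{s'_1}\cup\om_{s'_4}$, and since the separation $\text{dist}(\om_{t_1},\om_{t_4})=|\om_{t_1}|$ is too small to bridge the gap of width $|\om_{s'_1}|$ between $\om_{s'_1}$ and $\om_{s'_4}$ once the containment is strict, the interval $\om_{t_4}$ must land in $\om_{s'_1}$, not in $\om_{s'_4}$. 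The inclusion $\om_{t_4}\subseteq\om_{s'_1}$ (the paper's chain $\om_{s'_1}\supseteq\om_{s_4}\supseteq\om_{t_4}$) is exactly what drives the comparison $c(\om_{t_4})>c(\om_{t'_4})$, since $\om_{t'_4}$ sits at the level of $\om_{s'_4}$ below the gap; your claimed $\om_{t_4}\subseteq\om_{s'_4}$ would wreck that comparison rather than support it.

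Second, and more seriously, in the case that actually matters ($I_{s'}\subseteq I_{T_1}$) your contradiction presupposes the selection order: you say $s'$ would belong to $T_1^{(1)}\subseteq T_1^*$, which ``was already removed from $S_0$ before $T_2$ was formed.'' If $T_2$ had been selected before $T_1$, then $s'$ is removed with $T_2$, and since $T_1^{(1)}$ is the maximal $1$-tree inside the \emph{current} $S_0$ at the moment $T_1$ is chosen, it simply does not contain $s'$ and no contradiction results. Establishing that $T_1$ is selected first is the whole content of the paper's crucial observation: from $\om_{t_4}\subseteq\om_{s'_1}$ one gets $c(\om_{t_4})>c(\om_{t'_4})$, the maximal-center rule then forces $T_1$ to be chosen while $s'$ is still present, and only then does $s'_1<t_1$ contradict $s'\in T_2$. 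Your later ``WLOG $T_1$ was selected before $T_2$'' is not a WLOG --- the order must be derived, and you only attempt the center comparison inside the vacuous Case 2 and with the incorrect inclusion. (A smaller point: you assert $s_1<t_1$ for every $s\in T_1$, but since $T_1=T_1^{(1)}\cup T_1^{(4)}$ you only know $\om_{s_j}\supseteq\om_{t_j}$ for some $j\in\{1,4\}$; the frequency inclusions you need must be extracted from the rigid geometry \eqref{first property of tiles}--\eqref{all in} in either case.)
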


\begin{proof}
Let $t$ and $t'$ denote the top of $T_1$ and $T_2$ respectively.
Assume otherwise $I_{s'}\cap I_{T_1}\neq\emptyset$. Then $I_{s'}\subseteq I_t$. By \eqref{all in} and the definition of tree, $\om_{s'_1}\supseteq \om_{s_4}\supseteq \om_{t_4}$. Then $T_1$ is selected before $T_2$ as $c(\om_{t_4})>c(\om_{t'_4})$. However, $s'_1<t_1$ indicates that $s'$ should be selected together with $T_1$ according to the algorithm (See Figure \ref{fig: crucial observation}). This contradicts with the assumption that $s'\in T_2$.

\end{proof}
Now we are ready to prove \eqref{goal before observation}. It is easy to see that

\[
\text{LHS of \eqref{goal before observation}}\lesssim \sum_{s\in T}\sum_{\substack{T'\neq T\\s'\in T',\om_{s_1}\subsetneqq\om_{s'_1}}}\int_{I_{s'}} \left(1+\frac{\text{dist}(I_s,x)}{|I_s|}\right)^{-N}\,dx.
\]
By Claim \ref{crucial observation}, $I_{s'}$'s are pairwise disjoint and the union of these intervals is contained in $(I_T)^c$. Therefore,

\[
\begin{split}
\sum_{s\in T}&\sum_{\substack{T'\neq T\\s'\in T',\om_{s_1}\subsetneqq\om_{s'_1}}}\int_{I_{s'}} \left(1+\frac{\text{dist}(I_s,x)}{|I_s|}\right)^{-N}\,dx\\
&\le\sum_{s\in T}\int_{(I_T)^c}\left(1+\frac{\text{dist}(I_s,x)}{|I_s|}\right)^{-N}\,dx\lesssim\sum_{s\in T}\left(1+\frac{\text{dist}(I_s,(I_T)^c)}{|I_s|}\right)^{-N}|I_s|
\end{split}
\]
Using the tree structure of $T$ and the grid structure of tiles, it is easy to see that

$$
\sum_{s\in T}\left(1+\frac{\text{dist}(I_s,(I_T)^c)}{|I_s|}\right)^{-N}|I_s|\lesssim |I_T|.
$$
This proves \eqref{goal before observation}.

\section{Telescoping} \label{section: telescoping}
\setcounter{equation}0
We prove Theorem \ref{thm:non-homo} by a telescoping argument. In what follows, $[x]$ will be used to denote the integer part of $x\in\ZR$. 

Since $k\ge N\ge 10\al/\beta$, $[\frac{\beta}{\al}k]$ is large and essentially we have

\[
T_N^{\al,\beta}(f_1,f_2,f_3)(x)=\sum_{k\ge N}H^{\al,k}(f_1,f_2)(x)f_{3}^{\beta,k}(x)=:A+B,
\]
where

\[
\begin{cases}
A:=\sum_{k\ge N}\left(\sum_{j=0}^{[(1-\frac{\beta}{\al})k]-1}\left(H^{\al,k-j}(f_1,f_2)(x)- H^{\al,k-j-1}(f_1,f_2)(x)\right) \right)f_{3}^{\beta,k}(x),\\
B:=\sum_{k\ge N}H^{\al,[\frac{\beta}{\al}k]}(f_1,f_2)(x)f_3^{\beta,k}(x) =\sum_{k\ge N}H^{\beta,k}(f_1,f_2)(x)f_{3}^{\beta,k}(x).
\end{cases}
\]

$B$ has a much better tile structure than $T^{\al,\beta}_N$: See Figure \ref{fig: before tele} and Figure \ref{fig: after tele} for a comparison. Since $B$ is a part of $T^{\beta,\beta}$ and the proof of Theorem \ref{thm: homo} is valid for any collection of scales $k$, boundedness of $B$ is obtained.

\begin{figure} [h]
\begin{center}
\begin{tikzpicture}

\draw[->] (-1.0,0) -- (4,0) coordinate (x axis);
\draw[->] (0,-1.0) -- (0,4) coordinate (y axis);

\node (rect) [rectangle, draw, minimum width=7mm, minimum height=8mm] at (2,1) {3};

\node (rect) [rectangle, draw, minimum width=7mm, minimum height=8mm] at (2,3) {1,2};

\end{tikzpicture}
\caption{tri-tile structure of $T^{\beta,\beta}$}
\label{fig: after tele}
\end{center}
\end{figure}
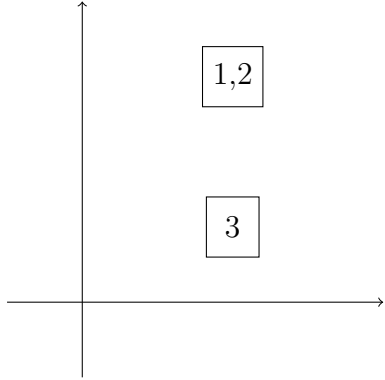

It remains to analyze the operator $A$. By a change of variable $k\to k+j$, we can write $A=I+II$, where

\[
\begin{cases}
I:=\sum_{k\ge N}\left(H^{\al,k}(f_1,f_2)(x)- H^{\al,k-1}(f_1,f_2)(x)\right) (\sum_{j=0}^{[(\frac{\al}{\beta}-1)k]}f_{3}^{\beta,k+j}(x)),\\
II:=\sum_{(k,j)\in P}\left(H^{\al,k}(f_1,f_2)(x)- H^{\al,k-1}(f_1,f_2)(x)\right) f_{3}^{\beta,k+j}(x).%,~ \text{$P$ is a finite set}%
\end{cases}
\]
Here $P$ is a finite set of indices, and $II$ should be considered as an error term, whose boundedness follows from H\"older and Lacey-Thiele's Theorem (\cite{LT97,LT99}). To prove the boundedness of the main term $I$, first note that 

\[
\sum_{j=0}^{[(\frac{\al}{\be}-1)k]}f_{3}^{\beta,k+j}(x)=f_{3}^{\al k}(x)-f_{3}^{\beta k}(x),
\]
where

\[
f^{l}(x):=\int \hat{f}(\xi)\phi_0\left(\frac{\xi}{2^l}\right)\,d\xi, ~l\in\ZR,
\]
for some bump function $\phi_0$ supported in $[-1,1]$.
Hence we can write $I$ as the difference of two parts:

\[
\begin{split}
I=&\sum_{k\ge 1}\left(H^{\al,k}(f_1,f_2)(x)- H^{\al,k-1}(f_1,f_2)(x)\right)f_{3}^{\al k}(x)-\\
&\sum_{k\ge 1}\left(H^{\al,k}(f_1,f_2)(x)- H^{\al,k-1}(f_1,f_2)(x)\right)f_{3}^{\beta, k}(x).
\end{split}
\]
Note that $H^{\al,k}(f_1,f_2)(x)- H^{\al,k-1}(f_1,f_2)(x)$ is a piece of BHT at scale $k$. Since $\al>\beta$ and $k>0$, the supports of $\widehat{{f_{3}^{\al k}}}$ and $\widehat{{f_{3}^{\beta k}}}$ are at most as large as $2^{\al k}$. We can introduce a fourth function and do the wave packet decomposition to $f_1,f_2,f_4$. Then the tiles associated with these functions have structures similar to that of the tri-tiles as in the study of BHT. Therefor, the proof of Theorem \ref{Thm:DL} given in \cite{DL} still applies to $I$, and we omit the details. This finishes the proof of Theorem \ref{thm:non-homo}.

\begin{acknowledgement}
The author would like to thank Prof. Xiaochun Li for helpful discussions on this topic. He also acknowledges the support from Gene H. Golub Fund of Mathematics Department at University of Illinois. 
\end{acknowledgement}

\end{document}